\newtheorem{theorem}{Theorem}[section]
\newtheorem{cor}[theorem]{Corollary}
\newtheorem{lem}[theorem]{Lemma}
\newtheorem{prop}[theorem]{Proposition}
\newtheorem{ex}{Example}[section]
\def \Zl {{\mathbb Z}}
\def \Nl {{\mathbb N}}
\def \Rl {{\mathbb R}}
\def \Cl {{\mathbb C}}
\def \ul {{\mathbf u}}
\def \vl {{\mathbf v}}
\def \x {{\mathbf x}}
\def \y {{\mathbf y}}
\def \sig {{\mathbf \sigma}}
\def \o {{\mathbf 0}}
\def \m {{\mathbf m}}
\def \n {{\mathbf n}}
\def \D {{\mathbf D}}
\def \C {{\mathbf C}}
\def \d {{\mathbf d}}
\title{ Perfect State Transfer on gcd-graphs}
\author{ Hiranmoy Pal\\
Department of Mathematics\\
Indian Institute of Technology Guwahati\\Guwahati, India - 781039\\
Email: hiranmoy@iitg.ernet.in\\
\\
Bikash Bhattacharjya\\
Department of Mathematics\\
Indian Institute of Technology Guwahati\\Guwahati, India - 781039\\
Email: b.bikash@iitg.ernet.in
}
\begin{document}
\maketitle

\vspace{-0.3in}

\begin{center}{Abstract}\end{center}
Let $G$ be a graph with adjacency matrix $A$. The transition matrix of $G$ is denoted by $H(t)$ and it is defined by $H(t):=\exp{\left(itA\right)},\;t\in\Rl.$ The graph $G$ has perfect state transfer (PST) from a vertex $u$ to another vertex $v$ if there exist $\tau\left(\neq0\right)\in\Rl$ such that the $uv$-th entry of $H(\tau)$ has unit modulus. In case when $u=v$, we say that $G$ is periodic at the vertex $u$ at time $\tau$. The graph $G$ is said to be periodic if it is periodic at all vertices at the same time. A gcd-graph is a Cayley graph over a finite abelian group defined by greatest common divisors. We establish a sufficient condition for a gcd-graph to have periodicity and PST at $\frac{\pi}{2}$. Using this we deduce that there exists gcd-graph having PST over an abelian group of order divisible by $4$. Also we find a necessary and sufficient condition for a class of gcd-graphs to be periodic at $\pi$. Using this we characterize a class of gcd-graphs not exhibiting PST at $\frac{\pi}{2^{k}}$ for all positive integers $k$.

\noindent {\textbf{Keywords}: Perfect state transfer, Cayley Graph, Kronecker product of graphs.} 

\section{Introduction} 
Perfect state transfer has a great significance in continuous-time quantum walks as it has applications in quantum information processing. Perfect sate transfer in quantum networks was introduced by Bose \cite{bose}. Quantum networks are modelled on finite graphs and when there is no external dynamic control over a system then PST depends only on the underlying graph. Main interest here is to find graphs exhibiting PST.\par
Christandl \emph{et al}. \cite{chr1,chr2} found that the paths $P_2$ and $P_3$ and their cartesian powers exhibits PST. A characterization of PST in NEPS (non-complete extended $p$-sum) of the path $P_3$ was given by Pal \emph{et al}. \cite{pal}. Bernasconi \emph{et al}. \cite{ber} showed that PST occurs on certain cubelike graphs which are actually gcd-graphs over a direct product of finitely many copies of the group $\Zl_2$. A complete characterization of PST in integral circulant networks was found by Ba\v si\'c \cite{mil,mil11}. The integral circulant graphs are also gcd-graphs over the cyclic group $\Zl_n$.\par
In this article we consider PST on gcd-graphs over general abelian groups. These graphs are known to have integral spectrum. It is well known that periodicity is a necessary condition for PST in regular graphs. In \cite{god1,god2}, Godsil found that a regular graph is periodic if and only if its eigenvalues are integers. Therefore, if a Cayley graph has PST then it must have integral spectrum. A characterization of integral Cayley graph over finite abelian groups is given in \cite{wal1}. Among the integral Cayley graphs we only consider gcd-graphs.  Here we find some characterizations of periodicity and PST in gcd-graphs.  More information regarding PST and periodicity can be found in \cite{chi,cou,ge,saxe,stev}.
\section{Preliminaries}
We introduce gcd-graphs over a finite abelian group. Let $\left(\Gamma,+\right)$ be a finite abelian group and consider $S\subseteq\Gamma$ with $\left\lbrace -s:s\in S\right\rbrace=S$. Such a set $S$ is called a symmetric subset of $\Gamma$. The Cayley graph over $\Gamma$ with connection set $S$ is denoted by $Cay\left(\Gamma,S\right)$. The graph has the vertex set $\Gamma$ where two vertices $a,b\in\Gamma$ are adjacent if and only if $a-b\in S$. If the additive identity $0\in S$ then $Cay\left(\Gamma,S\right)$ has loops at each of its vertices. In that case, we use the convention that \textbf{each loop contributes one} to the corresponding diagonal entry of the adjacency matrix. However while discussing PST on gcd-graphs, we consider simple graphs, \emph{i.e,} we consider $0\not\in S$.
\par The greatest common divisor of two non-negative integers $m,n$ is denoted by $gcd(m,n)$. For every non-negative integer $n$ we use the convention that $gcd(0,n)=gcd(n,0)=n$. Let us consider two $r$-tuples of non-negative integers $\m=\left(m_1,\ldots, m_r\right)$ and $\n=\left(n_1,\ldots, n_r\right)$. For $i=1,\ldots,r$, suppose $gcd(m_i,n_i)=d_i$ and we write $\d=\left(d_1,\ldots,d_r\right)$. We define gcd of $\m,\n$ to be $\d$ and write $gcd(\m,\n)=\d$.\par
Let $\Zl_n$ be the cyclic group of order $n$. Every finite abelian group $\left(\Gamma,+\right)$ has a cyclic group decomposition
$$\Gamma=\Zl_{m_1}\oplus\ldots\oplus\Zl_{m_r},\text{ where }r\geq 1\text{ and }m_i\geq 1 \text{ for } i=1,\ldots,r.$$
For each $i=1,\ldots, r$, assume that $d_i$ is a divisor of $m_i$ with $1\leq d_i\leq m_i$. For the divisor tuple $\d=\left(d_1,\ldots, d_r\right)$ of $\m=\left(m_1,\ldots, m_r\right)$, define \[S_\Gamma(\d)=\left\lbrace\x\in\Gamma : gcd(\x,\m)=\d\right\rbrace.\] Let $\D$ be a set of divisor tuples of $\m$ and define $$S_\Gamma(\D)=\bigcup\limits_{\d\in \D}S_\Gamma(\d).$$
Note that the union is actually a disjoint union. The sets $S_\Gamma(\D)$ are called gcd-sets of $\Gamma$. A Cayley graph over a finite abelian group whose connection set is a gcd-set is called a gcd-graph. More information regarding gcd-graphs can be found in \cite{wal,wal1}.\par
Consider two graphs $G_1$ and $G_2$ with the set of vertices $U$ and $V$, respectively. The Kronecker product \cite{cev} of $G_1$ and $G_2$ is denoted by $G_1\times G_2$ which has the vertex set $U\times V$. Two vertices $(u_{1},u_{2})$ and $(v_{1},v_{2})$ are adjacent in $G_1\times G_2$ whenever $u_{1}$ is adjacent to $v_{1}$ in $G_1$ and $u_{2}$ is adjacent to $v_{2}$ in $G_2$. If $G_1$ and $G_2$ have the adjacency matrices $A_1$ and $A_2$, respectively, then $G_1\times G_2$ has the adjacency matrix $A_1\otimes A_2$, the tensor product of $A_1$ and $A_2$. The transition matrix of Kronecker product of two graphs can be calculated as follows.
\begin{prop}\cite{god1,pal}\label{aa}
Let $G_1$ and $G_2$ be two graphs with adjacency matrices $A_1$ and $A_2$, respectively. Suppose the transition matrix of $G_1$ is $H(t)$. If the spectral decomposition of $A_2$ is $\sum\limits_{s=1}^{q}\mu_{s}F_{s}$ then $G_1\times G_2$ has the transition matrix $\sum\limits_{s=1}^{q} H(\mu_{s}t)\otimes F_{s}$.
\end{prop}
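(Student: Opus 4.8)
The plan is to compute the transition matrix of $G_1\times G_2$ straight from the definition: since $G_1\times G_2$ has adjacency matrix $A_1\otimes A_2$, its transition matrix is $\exp\!\left(it(A_1\otimes A_2)\right)$, and the goal is to identify this with $\sum_{s=1}^{q}H(\mu_s t)\otimes F_s$.

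First I would assemble the two algebraic facts needed. From multiplicativity of the Kronecker product, $(A_1\otimes A_2)^k=A_1^k\otimes A_2^k$ for all $k\geq 0$. From the spectral decomposition $A_2=\sum_{s=1}^{q}\mu_s F_s$, the idempotents $F_s$ are the orthogonal projections onto the eigenspaces of $A_2$, so $F_sF_t=\delta_{st}F_s$ and $\sum_{s=1}^{q}F_s=I$; a one-line induction then gives $A_2^k=\sum_{s=1}^{q}\mu_s^k F_s$ for all $k\geq 0$. Combining the two identities, $(A_1\otimes A_2)^k=\sum_{s=1}^{q}(\mu_s A_1)^k\otimes F_s$ for all $k\geq 0$.

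Next I would substitute this into the exponential series. Because $\sum_{k\geq 0}\frac{(it)^k}{k!}M^k$ converges absolutely (in any matrix norm) for every square matrix $M$, the finite sum over $s$ may be pulled outside the sum over $k$:
$$\exp\!\left(it(A_1\otimes A_2)\right)=\sum_{k\geq 0}\frac{(it)^k}{k!}\sum_{s=1}^{q}(\mu_s A_1)^k\otimes F_s=\sum_{s=1}^{q}\left(\sum_{k\geq 0}\frac{(it)^k}{k!}(\mu_s A_1)^k\right)\otimes F_s=\sum_{s=1}^{q}\exp\!\left(it\mu_s A_1\right)\otimes F_s,$$
and since $\exp(it\mu_s A_1)=H(\mu_s t)$ this is exactly the claimed expression.

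There is no serious obstacle here; the argument is essentially bookkeeping. The only two points that warrant care are the identity $A_2^k=\sum_s\mu_s^k F_s$, which relies on the mutual orthogonality $F_sF_t=\delta_{st}F_s$ of the spectral idempotents, and the interchange of the two summations, which is justified by absolute convergence of the matrix exponential. One could instead argue that the $I\otimes F_s$ form a complete family of commuting orthogonal projections on whose ranges $A_1\otimes A_2$ acts as $\mu_s A_1\otimes F_s$, but the power-series computation is the most transparent route to the stated formula.
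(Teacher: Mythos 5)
Your proof is correct. The paper does not actually prove this proposition---it is quoted from the references \cite{god1,pal}---and your power-series computation via $(A_1\otimes A_2)^k=\sum_{s}(\mu_s A_1)^k\otimes F_s$ is the standard argument, with the two delicate points (the identity $A_2^k=\sum_s\mu_s^kF_s$, including the $k=0$ case via $\sum_sF_s=I$, and the interchange of sums) correctly identified and justified.
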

More information regarding PST on Kronecker products can be found in \cite{ god1}.

\section{PST on cubelike graphs}\label{cube}
A cubelike graph $X(\C)$ is a Cayley graph over $\Zl_{2}^{n}$ with a connection set $\C\subset\Zl_{2}^{n}$. PST on cubelike graphs (simple) has already been discussed in \cite{ber,chi}. Here we discuss some relevant results from \cite{chi}, which remain valid for looped cubelike graphs. Recall that each loop contributes one to the adjacency matrix. Finally we deduce a simple result that will be used later to characterize PST in gcd-graphs over general abelian groups.
\par For each $\x\in\Zl_{2}^{n}$, the map $P_{\x}:\Zl_{2}^{n}\longrightarrow\Zl_{2}^{n}$ defined by $P_{\x}\left(\y\right)=\x+\y$ is a permutation of the elements of $\Zl_{2}^{n}$ and hence it can be realized as a permutation matrix of appropriate order. It is easy to see that $P_{\o}=I$ and  $P_{\x}P_{\y}=P_{\x+\y}$ which imply $P_{\x}^{2}=I$. The following result finds the adjacency matrix of a cubelike graph.
\begin{lem}\label{2a}\cite{chi}
If $\C\subseteq\Zl_{2}^{n}$ and $X(\C)$ is the cubelike graph with connection set $\C$ then $X(\C)$ has the adjacency matrix $A=\sum\limits_{\x\in \C}P_{\x}.$
\end{lem}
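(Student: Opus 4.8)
The plan is to prove the identity by comparing the two matrices entry by entry. First I would unwind the definitions: by construction $X(\C)=Cay(\Zl_2^n,\C)$ has vertex set $\Zl_2^n$, and two vertices $\ul,\vl$ are adjacent exactly when $\ul-\vl\in\C$, so the $(\ul,\vl)$ entry of the adjacency matrix $A$ equals $1$ if $\ul-\vl\in\C$ and $0$ otherwise. Since every element of $\Zl_2^n$ is its own additive inverse, $\ul-\vl=\ul+\vl$; hence $A_{\ul,\vl}=1$ precisely when $\ul+\vl\in\C$. In particular, if $\o\in\C$ then $A_{\ul,\ul}=1$ for every vertex $\ul$, which is exactly the convention that each loop contributes one to the corresponding diagonal entry.

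Next I would record the matrix realization of the permutation $P_{\x}$. Taking $(P_{\x})_{\ul,\vl}=1$ if $\ul=P_{\x}(\vl)=\x+\vl$ and $0$ otherwise, one sees at once that this is a permutation matrix and that it satisfies $P_{\o}=I$ and $P_{\x}P_{\y}=P_{\x+\y}$, as asserted in the text. Then, for fixed vertices $\ul,\vl$,
\[
\Bigl(\sum_{\x\in\C}P_{\x}\Bigr)_{\ul,\vl}=\sum_{\x\in\C}(P_{\x})_{\ul,\vl}=\bigl|\{\x\in\C:\x=\ul+\vl\}\bigr|,
\]
and because $\C$ is a set this cardinality is $1$ when $\ul+\vl\in\C$ and $0$ otherwise. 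Comparing with the previous paragraph, this is exactly $A_{\ul,\vl}$, and since $\ul,\vl$ were arbitrary I conclude $A=\sum_{\x\in\C}P_{\x}$.

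I do not expect any genuine obstacle; the entire content is bookkeeping about the two conventions involved. The one point that deserves a sentence of care is the treatment of loops: the formula produces a contribution of exactly $1$ to each diagonal entry when $\o\in\C$ because $P_{\o}=I$, which matches the stated ``each loop contributes one'' convention — had the other (doubled) loop convention been adopted, the right-hand side would need an extra $P_{\o}$ term. It is also worth noting in passing that $\sum_{\x\in\C}P_{\x}$ is automatically symmetric since $\ul+\vl=\vl+\ul$, consistent with $A$ being the adjacency matrix of an undirected graph (indeed every subset of $\Zl_2^n$ is a symmetric set).
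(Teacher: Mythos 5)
Your proof is correct; the paper itself gives no proof of this lemma (it is quoted from Cheung and Godsil), and your entry-by-entry verification is exactly the standard argument, including the right handling of the loop convention via $P_{\o}=I$.
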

Note that $P_{\x}P_{\y}=P_{\x+\y}=P_{\y}P_{\x}$ for all $\x,\y\in\Zl_n$. Therefore by using the property of matrix exponential we find that
$$\exp{\left(it(P_{\x}+P_{\y})\right)}=\exp{(itP_{\x})}\exp{(itP_{\y})}.$$ Using this the transition matrix of a cubelike graph can be calculated as follows.
\begin{lem}\label{2b}\cite{chi}
If $H(t)$ is the transition matrix of the cubelike graph $X(\C)$ then $$H(t)=\prod\limits_{\x\in \C}\exp{(itP_{\x})}.$$
\end{lem}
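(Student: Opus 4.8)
The plan is to combine Lemma~\ref{2a} with the elementary fact that the matrix exponential turns a sum of pairwise commuting matrices into a product. By Lemma~\ref{2a} the adjacency matrix of $X(\C)$ is $A=\sum_{\x\in\C}P_{\x}$, so by definition $H(t)=\exp(itA)=\exp\!\big(it\sum_{\x\in\C}P_{\x}\big)$. The matrices $\{\,itP_{\x}:\x\in\C\,\}$ commute pairwise, since $P_{\x}P_{\y}=P_{\x+\y}=P_{\y+\x}=P_{\y}P_{\x}$ for all $\x,\y\in\Zl_{2}^{n}$; hence the identity $\exp(B+C)=\exp(B)\exp(C)$ for commuting $B,C$ (displayed just before the statement) may be applied repeatedly.

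Concretely, I would argue by induction on $|\C|$. If $|\C|=1$, say $\C=\{\x\}$, the claim is just the definition of $H(t)$. For the inductive step pick $\x_{0}\in\C$ and write $A=P_{\x_{0}}+\sum_{\x\in\C\setminus\{\x_{0}\}}P_{\x}$; since $P_{\x_{0}}$ commutes with every $P_{\x}$, and therefore with the sum $\sum_{\x\in\C\setminus\{\x_{0}\}}P_{\x}$, the displayed identity gives $H(t)=\exp(itP_{\x_{0}})\cdot\exp\!\big(it\sum_{\x\in\C\setminus\{\x_{0}\}}P_{\x}\big)$, and the second factor equals $\prod_{\x\in\C\setminus\{\x_{0}\}}\exp(itP_{\x})$ by the inductive hypothesis, which completes the step.

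There is essentially no genuine obstacle here; the only point worth recording is that the product $\prod_{\x\in\C}\exp(itP_{\x})$ is well defined independently of the order of its factors, which follows because the factors themselves commute: $\exp(itP_{\x})\exp(itP_{\y})=\exp(it(P_{\x}+P_{\y}))=\exp(itP_{\y})\exp(itP_{\x})$. This commutativity is exactly what makes cubelike graphs tractable: it reduces the computation of $H(t)$, and in particular the detection of PST, to analysing a finite product of the very simple matrices $\exp(itP_{\x})=\cos(t)\,I+i\sin(t)\,P_{\x}$, where the last equality uses $P_{\x}^{2}=I$.
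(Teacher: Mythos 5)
Your proposal is correct and follows exactly the route the paper indicates: apply Lemma~\ref{2a} to write $A=\sum_{\x\in\C}P_{\x}$, note that the $P_{\x}$ commute via $P_{\x}P_{\y}=P_{\x+\y}$, and convert the exponential of the sum into a product of exponentials; your induction merely formalizes the repeated application of $\exp(B+C)=\exp(B)\exp(C)$ that the paper invokes implicitly. The observation that the product is order-independent is a worthwhile bit of added care but does not change the argument.
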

We already have $P_{\x}^{2}=I$ and this implies that
\begin{eqnarray*}
\exp{(itP_{\x})} &=& I+itP_{\x}-\frac{t^{2}}{2!}I-i\frac{t^{3}}{3!}P_{\x}+\frac{t^{4}}{4!}I+\ldots \\
&=&\cos{(t)}I+i\sin{(t)}P_{\x}.
\end{eqnarray*}
Observe that if $\sig$ is the sum of the elements in $\C\subseteq\Zl_{2}^{n}$ then $\prod\limits_{\x\in \C}P_{\x}=P_{\sig}$. Therefore by using Lemma \ref{2b}, we deduce that
$$H\left(\frac{\pi}{2}\right)=\prod\limits_{\x\in \C}iP_{\x}=i^{|\C|}P_{\sig}.$$ The next result determines periodicity and PST in cubelike graphs at $t=\frac{\pi}{2}$.
\begin{theorem}\label{2c}\cite{chi}
Let $\C\subseteq\Zl_{2}^{n}$ and let $\sig$ be the sum of the elements of $\C$. If $\sig\neq \o$ then PST occurs in $X(\C)$ from $\x$ to $\x+\sig$ at $\frac{\pi}{2}$. If $\sig=\o$ then $X(\C)$ is periodic with period $\frac{\pi}{2}$. 
\end{theorem}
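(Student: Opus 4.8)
The plan is to extract everything directly from the identity $H\!\left(\frac{\pi}{2}\right)=i^{|\C|}P_{\sig}$ that has just been derived from Lemma \ref{2b}; once this closed form is in hand, the theorem is merely a matter of reading off the entries of the permutation matrix $P_{\sig}$.

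First I would recall that $P_{\sig}$ is the permutation matrix of the map $\y\mapsto\sig+\y$ on $\Zl_{2}^{n}$, so its $(\x,\y)$-entry is $1$ when $\y=\x+\sig$ and $0$ otherwise. Since $i^{|\C|}$ is a complex number of unit modulus, the $(\x,\y)$-entry of $H\!\left(\frac{\pi}{2}\right)$ has modulus $1$ exactly when $\y=\x+\sig$, and has modulus $0$ otherwise. Then I split into two cases. If $\sig\neq\o$, then $\x+\sig\neq\x$ for every vertex $\x$, and the $(\x,\x+\sig)$-entry of $H\!\left(\frac{\pi}{2}\right)$ equals $i^{|\C|}$, which has unit modulus; by definition this is PST from $\x$ to $\x+\sig$ at time $\frac{\pi}{2}$. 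If $\sig=\o$, then $P_{\sig}=P_{\o}=I$, hence $H\!\left(\frac{\pi}{2}\right)=i^{|\C|}I$; every diagonal entry has unit modulus, so $X(\C)$ is periodic at each vertex at time $\frac{\pi}{2}$, i.e.\ periodic with period $\frac{\pi}{2}$.

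There is no serious obstacle here, since all the analytic content was already handled in passing from Lemma \ref{2b} to the formula $H\!\left(\frac{\pi}{2}\right)=i^{|\C|}P_{\sig}$ (using $\prod_{\x\in\C}P_{\x}=P_{\sig}$ and $\exp(itP_{\x})=\cos t\,I+i\sin t\,P_{\x}$). The only point deserving a line of care is the bookkeeping in $\Zl_{2}^{n}$: because each element is its own inverse, $\x+\sig=\y$ is equivalent to $\x=\y+\sig$, so the transfer relation $\x\mapsto\x+\sig$ is symmetric and well defined, and in the first case one should explicitly note $\x\neq\x+\sig$ so that $\tau=\frac{\pi}{2}\neq0$ genuinely witnesses PST rather than periodicity.
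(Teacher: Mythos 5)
Your proposal is correct and follows exactly the route the paper takes: it derives $H\!\left(\frac{\pi}{2}\right)=i^{|\C|}P_{\sig}$ from Lemma \ref{2b} via $\exp(itP_{\x})=\cos t\,I+i\sin t\,P_{\x}$ and $\prod_{\x\in\C}P_{\x}=P_{\sig}$, and then reads the conclusion off the entries of the permutation matrix. The paper states the theorem as an immediate consequence of that displayed formula (citing \cite{chi}), so your case split on $\sig\neq\o$ versus $\sig=\o$ is precisely the intended argument.
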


Now we find a sufficient condition for periodicity in a class of graphs constructed from cubelike graphs. Consider the following result.

\begin{prop}\label{2d}
Let $\C\subseteq\Zl_{2}^{n}$ and let $X(\C)$ be a cubelike graph with the connection set $\C$. Assume that the sum of the elements of $\C$ is $\o$ and $|\C|\equiv 0\;(\text{mod}\;4)$. Then for every integral graph $G$, the transition matrix of $X(\C)\times G$ at $\frac{\pi}{2}$ is the identity operator.
\end{prop}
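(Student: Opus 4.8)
The plan is to first establish that the transition matrix $H(t)$ of $X(\C)$ itself satisfies $H\bigl(\tfrac{k\pi}{2}\bigr)=I$ for \emph{every} integer $k$, and then combine this with Proposition \ref{aa}. For the first part, I would start from Lemma \ref{2b} together with the expansion $\exp(itP_\x)=\cos(t)I+i\sin(t)P_\x$ derived above, which gives
\[
H(t)=\prod_{\x\in\C}\bigl(\cos(t)\,I+i\sin(t)\,P_\x\bigr).
\]

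Evaluating at $t=\tfrac{k\pi}{2}$, I would split into two cases by the parity of $k$. If $k$ is even then $\sin\bigl(\tfrac{k\pi}{2}\bigr)=0$ and $\cos\bigl(\tfrac{k\pi}{2}\bigr)=\pm1$, so every factor equals $\pm I$ and $H\bigl(\tfrac{k\pi}{2}\bigr)=(\pm1)^{|\C|}I=I$ because $|\C|$ is even. If $k$ is odd then $\cos\bigl(\tfrac{k\pi}{2}\bigr)=0$ and $\sin\bigl(\tfrac{k\pi}{2}\bigr)=\pm1$, so each factor equals $\pm iP_\x$; using $\prod_{\x\in\C}P_\x=P_\sig=P_\o=I$ (since $\sig=\o$ by hypothesis) we obtain $H\bigl(\tfrac{k\pi}{2}\bigr)=(\pm i)^{|\C|}I$, and since $|\C|\equiv0\pmod4$ the scalar $(\pm i)^{|\C|}$ equals $1$. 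Hence $H\bigl(\tfrac{k\pi}{2}\bigr)=I$ for all $k\in\Zl$.

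Finally, let $A$ be the adjacency matrix of the integral graph $G$ with spectral decomposition $A=\sum_{s=1}^{q}\mu_s F_s$, so that each $\mu_s\in\Zl$. By Proposition \ref{aa} the transition matrix of $X(\C)\times G$ is $\sum_{s=1}^{q}H(\mu_s t)\otimes F_s$, and evaluating at $t=\tfrac{\pi}{2}$ gives $\sum_{s=1}^{q}H\bigl(\tfrac{\mu_s\pi}{2}\bigr)\otimes F_s=\sum_{s=1}^{q}I\otimes F_s=I\otimes\bigl(\sum_{s=1}^{q}F_s\bigr)=I\otimes I=I$. The computation is routine; the only place needing care is the odd-$k$ case, where both hypotheses are used at once ($\sig=\o$ to collapse $\prod_{\x\in\C}P_\x$ to $I$, and $4\mid|\C|$ to remove the power of $i$). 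Conceptually the statement just records that these hypotheses make $X(\C)$ periodic not only at $\tfrac{\pi}{2}$ but at every integer multiple of $\tfrac{\pi}{2}$, which is precisely what Proposition \ref{aa} requires in order to conclude periodicity of the Kronecker product.
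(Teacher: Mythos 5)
Your proof is correct and follows essentially the same route as the paper: both arguments reduce the claim to showing that the transition matrix $H$ of $X(\C)$ satisfies $H\left(\frac{k\pi}{2}\right)=I$ for every integer $k$ and then apply Proposition \ref{aa} exactly as you do. The only difference is that the paper gets this intermediate fact from $H\left(\frac{\pi}{2}\right)=i^{|\C|}P_{\sig}=I$ together with the identity $H\left(\frac{k\pi}{2}\right)=\left(H\left(\frac{\pi}{2}\right)\right)^{k}$, whereas you re-evaluate the product $\prod_{\x\in\C}\left(\cos(t)I+i\sin(t)P_{\x}\right)$ at $t=\frac{k\pi}{2}$ with a parity case-split --- slightly longer, but an equally valid computation using the same two hypotheses in the same places.
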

\begin{proof}
If $H(t)$ is the transition matrix of the cubelike graph $X(\C)$ then we have
$$H\left(\frac{\pi}{2}\right)=\prod\limits_{\x\in \C}iP_{\x}=i^{|\C|}P_{\sig},$$ where $\sig$ is the sum of the elements in $\C$. Now $\sig=\o$ implies that $P_{\sig}=I$ and therefore if $|\C|\equiv 0\;(\text{mod}\;4)$, then $H\left(\frac{\pi}{2}\right)=I$. For every integer $\mu$, we find that $H\left(\frac{\pi\mu}{2}\right)=\left(H\left(\frac{\pi}{2}\right)\right)^{\mu}=I$. Now consider $\sum\limits_{s=1}^{q}\mu_{s}F_{s}$ to be the spectral decomposition of adjacency matrix of $G$. As the graph $G$ is assumed to be integral, the eigenvalues $\mu_s$ of $G$ are integers. By Proposition \ref{aa}, the transition matrix of the graph $X(\C)\times G$ at $\frac{\pi}{2}$ is obtained as
$$\sum\limits_{s=1}^{q} H\left(\frac{\pi\mu_s}{2}\right)\otimes F_{s}=I\otimes\sum\limits_{s=1}^{q} F_{s}=I\otimes I=I,$$ where the identity matrices have appropriate orders. This proves our claim.
\end{proof}

\section{Periodicity and PST on gcd-Graphs}
Now we investigate gcd-graphs for periodicity and PST. First we construct some periodic gcd-graphs which are not necessarily connected. Also we find some gcd-graphs exhibiting PST and then we club them to obtain connected gcd-graph having PST. First consider the following characterization of PST in vertex-transitive graphs as given in \cite{god1}.
\begin{theorem}\cite{god1}\label{c1}
Suppose $G$ is a connected vertex-transitive graph with vertices $u$ and $v$, and perfect state transfer from $u$ to $v$ occurs at time $\tau$. Then the transition matrix of $G$ is a scalar multiple of a permutation matrix of order two and no fixed points, and it lies in the center of the automorphism group of $G$.
\end{theorem}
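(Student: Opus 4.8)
The plan is to derive everything from two structural facts about the transition matrix $H(\tau)=\exp(i\tau A)$. First, $H(\tau)$ is unitary, because $i\tau A$ is skew-Hermitian, and symmetric, because it is a power series in the real symmetric matrix $A$. Second, for every $\sigma\in\mathrm{Aut}(G)$ the associated permutation matrix $P_\sigma$ commutes with $A$, hence with $H(\tau)$.

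From unitarity, if $|H(\tau)_{uv}|=1$ then row $u$ and column $v$ of $H(\tau)$ each have Euclidean norm $1$, so row $u$ is supported only on the $v$-th coordinate and column $v$ only on the $u$-th coordinate; by symmetry the same holds for row $v$ and column $u$. Next I would globalise this. From $P_\sigma H(\tau)=H(\tau)P_\sigma$ one reads off $|H(\tau)_{\sigma(u),\sigma(v)}|=|H(\tau)_{uv}|=1$, so perfect state transfer also occurs from $\sigma(u)$ to $\sigma(v)$; since $\mathrm{Aut}(G)$ is transitive on vertices, every vertex is a source of perfect state transfer, and therefore every row of $H(\tau)$ has exactly one nonzero entry, of modulus $1$. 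Thus $H(\tau)$ is a monomial matrix: there are unit scalars $\gamma_w$ and a permutation $\pi$ of $V(G)$ with $H(\tau)e_w=\gamma_w e_{\pi(w)}$ for every vertex $w$.

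It then remains to identify $\pi$ and the scalars. Comparing the $\pi(w)$-th column of $H(\tau)$ computed directly with the same column obtained from symmetry forces $\pi^2=\mathrm{id}$ and $\gamma_{\pi(w)}=\gamma_w$. Applying $P_\sigma H(\tau)=H(\tau)P_\sigma$ to $e_w$ and equating the two sides yields at once $\sigma\pi=\pi\sigma$ for every $\sigma\in\mathrm{Aut}(G)$ and $\gamma_{\sigma(w)}=\gamma_w$; transitivity then collapses all the $\gamma_w$ to a single unit scalar $\gamma$, so $H(\tau)=\gamma\Pi$, where $\Pi$ is the permutation matrix of $\pi$. Since $H(\tau)$ commutes with $A$, so does $\Pi$, hence $\Pi A\Pi^{-1}=A$ and $\pi\in\mathrm{Aut}(G)$; as $\pi$ commutes with every automorphism, it is central. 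Finally, if $\pi$ fixed a vertex $w_0$, then transitivity together with centrality would force $\pi$ to fix every vertex, making $H(\tau)$ diagonal and contradicting $|H(\tau)_{uv}|=1$ with $u\neq v$; so $\pi$ has no fixed points, and with $\pi^2=\mathrm{id}$ this means $\Pi$ has order two. The opening norm estimate is routine; I expect the main obstacle to be the transitivity bookkeeping in the last paragraph — in particular, showing that the local phases $\gamma_w$ are all forced to agree and that $\pi$ is simultaneously a central, fixed-point-free automorphism.
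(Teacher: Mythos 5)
Your proof is correct and complete. Note that the paper itself offers no proof of this statement---it is quoted verbatim from Godsil's \emph{State transfer on graphs} \cite{god1}---and your argument (unitarity plus complex symmetry of $H(\tau)$ forces a monomial structure, commutation with each $P_\sigma$ propagates PST by transitivity and collapses the phases, and symmetry gives $\pi^2=\mathrm{id}$) is essentially the standard proof from that source, with the transitivity bookkeeping in the final paragraph handled correctly.
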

Consequently, if a vertex-transitive graph admits PST then it must have an even number of vertices. We therefore have the following obvious characterization for PST in gcd-graphs.
\begin{cor}\label{c1c}
A gcd-graph over a group of odd order does not exhibit perfect state transfer.
\end{cor}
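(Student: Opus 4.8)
The plan is to derive this corollary as an immediate consequence of Theorem \ref{c1}. The key observation is that every gcd-graph is a Cayley graph over a finite abelian group, and Cayley graphs are vertex-transitive (the left-translation maps $x \mapsto x + a$ are graph automorphisms acting transitively on the vertex set). So if a gcd-graph over a group $\Gamma$ exhibits PST, then Theorem \ref{c1} applies — provided the graph is connected.

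First I would note that PST from $u$ to $v$ automatically forces $u \neq v$ (since $\tau \neq 0$ and a vertex-transitive graph cannot be periodic and simultaneously have the transition matrix equal to a permutation with a fixed point at $u$ unless... — actually more simply, PST from $u$ to $v$ with $u = v$ is by definition periodicity, not PST in the strict sense used here, though one should be slightly careful about the definitions in the abstract). The cleaner route: if a gcd-graph has PST, pass to the connected component containing $u$; but in fact a more robust argument avoids the connectivity subtlety entirely. By Theorem \ref{c1}, the transition matrix $H(\tau)$ of the (connected, vertex-transitive) graph is a scalar multiple of a permutation matrix that is an involution with no fixed points; such a permutation matrix pairs up the vertices, so the number of vertices must be even. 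Since the vertex set is $\Gamma$, we get $|\Gamma|$ even, contradicting the hypothesis that $|\Gamma|$ is odd.

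The one genuine gap to address is that Theorem \ref{c1} is stated for \emph{connected} vertex-transitive graphs, while a gcd-graph need not be connected. I would handle this by observing that a disconnected vertex-transitive graph is a disjoint union of isomorphic connected components, each again vertex-transitive, and PST from $u$ to $v$ can only happen within a single component (the transition matrix is block-diagonal with respect to the components). The component containing $u$ is a connected vertex-transitive graph with PST, so by Theorem \ref{c1} it has an even number of vertices; since all components are isomorphic, every component has an even number of vertices, and hence so does $\Gamma$ — again contradicting oddness. Alternatively, and more in the spirit of the paper, one notes that each connected component of a Cayley graph $Cay(\Gamma, S)$ is itself a coset of the subgroup generated by $S$, hence has cardinality dividing $|\Gamma|$; but this is not needed for the parity argument.

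The main "obstacle" here is really just bookkeeping rather than mathematics: making sure the definitions line up (PST genuinely has $u \neq v$, so the no-fixed-point conclusion of Theorem \ref{c1} is the relevant one) and that the connectivity hypothesis of Theorem \ref{c1} is not quietly violated. Once those are pinned down, the proof is a two-line deduction: vertex-transitive $+$ PST $\Rightarrow$ even order, and odd order excludes it.
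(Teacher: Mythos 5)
Your proof is correct and follows essentially the same route as the paper: the paper also deduces the corollary directly from Theorem \ref{c1}, noting that the fixed-point-free involutory permutation forces an even number of vertices. Your extra care about the connectivity hypothesis (reducing to a connected component, all components being isomorphic) is a detail the paper silently skips, but it does not change the argument.
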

The eigenvalues and the corresponding eigenvectors of a Cayley graph over an abelian group are well known. In \cite{wal1}, it is shown that the eigenvectors are independent of the connection set. Consider two symmetric subsets $S_1,S_2$ in $\Gamma$. So the set of eigenvectors of both graphs $Cay(\Gamma, S_1 )$ and $Cay(\Gamma, S_2 )$ can be chosen to be equal. Hence we have the following result.
\begin{prop}\label{3ab}
If $S_1$ and $S_2$ are symmetric subsets of an abelian group $\Gamma$ then adjacency matrices of the Cayley garphs $Cay(\Gamma, S_1 )$ and $Cay(\Gamma, S_2 )$ commute.
\end{prop}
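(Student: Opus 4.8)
The plan is to exhibit a single basis of common eigenvectors for the two adjacency matrices; simultaneous diagonalizability is equivalent to commuting. Concretely, write $n=|\Gamma|$ and index the coordinates of $\Cl^{n}$ by the elements of $\Gamma$. For each $g\in\Gamma$ let $P_{g}$ be the $n\times n$ permutation matrix of the translation $x\mapsto x+g$ on $\Gamma$, exactly as in Section~\ref{cube} but now over an arbitrary finite abelian group. Since $\Gamma$ is abelian, $P_{g}P_{h}=P_{g+h}=P_{h}P_{g}$ for all $g,h\in\Gamma$, so $\{P_{g}:g\in\Gamma\}$ is a commuting family.

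First I would record that for any symmetric $S\subseteq\Gamma$ the adjacency matrix of $Cay(\Gamma,S)$ is $A_{S}=\sum_{s\in S}P_{s}$: the $(a,b)$-entry of $\sum_{s\in S}P_{s}$ equals $1$ precisely when $b=a+s$ for some $s\in S$, i.e.\ when $a-b\in -S=S$, which is the adjacency condition for $Cay(\Gamma,S)$. (Symmetry of $S$ is what makes $A_{S}$ symmetric, as it must be.) Then the conclusion is immediate:
\[
A_{S_{1}}A_{S_{2}}=\sum_{s\in S_{1}}\sum_{t\in S_{2}}P_{s}P_{t}=\sum_{s\in S_{1}}\sum_{t\in S_{2}}P_{t}P_{s}=A_{S_{2}}A_{S_{1}}.
\]

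Equivalently, and in line with the description of the spectrum recalled just before the statement and in \cite{wal1}: the $n$ characters $\chi$ of $\Gamma$ give the vectors $v_{\chi}=(\chi(g))_{g\in\Gamma}$, these form a basis of $\Cl^{n}$ by orthogonality of characters, and $A_{S}v_{\chi}=\left(\sum_{s\in S}\chi(s)\right)v_{\chi}$ for every symmetric $S$. Since this eigenbasis does not depend on $S$, both $A_{S_{1}}$ and $A_{S_{2}}$ are diagonalized by it, and two diagonalizable matrices sharing a full eigenbasis commute.

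There is essentially no obstacle here; the result is a structural fact about Cayley graphs over abelian groups, so the only thing to be careful about is bookkeeping: confirming that the $v_{\chi}$ genuinely form a full basis, and noting that symmetry of $S$ is used only to keep $A_{S}$ real and symmetric and is not actually needed for the commuting conclusion. I would present the translation-matrix computation above as the main argument, since it needs nothing beyond $\Gamma$ being abelian.
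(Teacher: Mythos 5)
Your proof is correct. The paper itself disposes of this proposition in one line, by appealing to the fact (quoted from the Klotz--Sander reference) that the eigenvectors of a Cayley graph over an abelian group --- the character vectors $v_{\chi}=(\chi(g))_{g\in\Gamma}$ --- do not depend on the connection set, so $A_{S_1}$ and $A_{S_2}$ are simultaneously diagonalizable and hence commute; this is exactly your second argument. Your primary argument, decomposing $A_{S}=\sum_{s\in S}P_{s}$ into commuting translation matrices, is a genuinely different and more self-contained route: it needs no spectral input at all, only that $\Gamma$ is abelian, and it is the natural extension to arbitrary abelian groups of the $P_{\x}$ machinery the paper already sets up in Section \ref{cube} for cubelike graphs (Lemma \ref{2a}). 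What the translation-matrix computation buys is independence from the cited characterization and the observation, which you correctly make, that symmetry of $S$ is irrelevant to commutativity; what the character argument buys is that it is the form the paper actually reuses later (shared spectral projections $F_{s}$ in Theorem \ref{6e}). Either argument alone would suffice.
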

The following result allows us to find the transition matrix of a Cayley graph, which is union of two edge disjoint Cayley graphs over an abelian group.
\begin{prop}\label{3a}
Let $\Gamma$ be a finite abelian group and consider two disjoint and symmetric subsets $S,T\subset\Gamma$. Suppose the transition matrices of $Cay(\Gamma, S)$ and $ Cay(\Gamma, T)$ are $H_{S}(t)$ and $H_{T}(t)$, respectively. Then $Cay(\Gamma, S\cup T)$ has the transition matrix $H_{S}(t)H_{T}(t).$
\end{prop}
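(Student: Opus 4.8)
The plan is to reduce everything to the standard fact that commuting matrices satisfy $\exp(X+Y)=\exp(X)\exp(Y)$. First I would observe that since $S$ and $T$ are disjoint symmetric subsets of $\Gamma$, the set $S\cup T$ is again symmetric, so $Cay(\Gamma,S\cup T)$ is a well-defined (simple) Cayley graph, and moreover its adjacency matrix decomposes as $A_{S\cup T}=A_S+A_T$, where $A_S$ and $A_T$ are the adjacency matrices of $Cay(\Gamma,S)$ and $Cay(\Gamma,T)$ respectively. This is immediate from the definition of the Cayley graph: two vertices $a,b$ are adjacent in $Cay(\Gamma,S\cup T)$ if and only if $a-b\in S\cup T$, and disjointness of $S$ and $T$ guarantees that the two indicator contributions never overlap.

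Next I would invoke Proposition \ref{3ab}, which tells us that $A_S$ and $A_T$ commute (the eigenvectors of a Cayley graph over an abelian group do not depend on the connection set, so $A_S$ and $A_T$ are simultaneously diagonalizable). Having $A_SA_T=A_TA_S$, I would then apply the elementary property of the matrix exponential that $\exp\!\left(it(A_S+A_T)\right)=\exp(itA_S)\exp(itA_T)$ for commuting $A_S,A_T$. By definition of the transition matrix, the left-hand side is the transition matrix of $Cay(\Gamma,S\cup T)$ evaluated at $t$, while the right-hand side is exactly $H_S(t)H_T(t)$. This completes the argument.

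There is essentially no obstacle here; the only point requiring a moment's care is confirming that $A_{S\cup T}=A_S+A_T$ really does hold on the nose, i.e. that we are in the simple-graph regime with $0\notin S\cup T$ so that no diagonal (loop) terms interfere, and that disjointness of $S$ and $T$ prevents any double-counting of edges. Once that bookkeeping is settled, commutativity from Proposition \ref{3ab} plus the exponential identity finishes the proof in one line. I would also remark, for later use, that by the same reasoning $H_S(t)H_T(t)=H_T(t)H_S(t)$, and that the construction iterates: a gcd-set $S_\Gamma(\D)=\bigcup_{\d\in\D}S_\Gamma(\d)$ is a disjoint union of symmetric sets, so the transition matrix of the associated gcd-graph factors as the product of the transition matrices of the constituent graphs $Cay(\Gamma,S_\Gamma(\d))$.
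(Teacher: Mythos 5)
Your proposal is correct and follows exactly the paper's route: the paper likewise invokes Proposition \ref{3ab} to get $A_SA_T=A_TA_S$ and then applies $\exp(A+B)=\exp(A)\exp(B)$ for commuting matrices. The only difference is that you spell out the bookkeeping step $A_{S\cup T}=A_S+A_T$ from disjointness, which the paper leaves implicit.
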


\begin{proof}
The group $\Gamma$ is a finite abelian group. By Proposition \ref{3ab}, the adjacency matrices of the graphs $Cay(\Gamma, S)$ and $ Cay(\Gamma, T)$ commute. We know that for any two square matrices $A, B$ with $AB=BA$, we have $\exp{(A+B)}=\exp{(A)}\exp{(B)}$. Using this we get the desired result.
\end{proof}
Suppose the prime factorization of an integer $n(\geq2)$ is $n = p_1\ldots p_k,$ where the primes are not necessarily distinct. In \cite{wal}, the authors showed that every gcd-graph with $n$ vertices is isomorphic to a gcd-graph over $\Gamma= \Zl_{p_1}\oplus\ldots\oplus\Zl_{p_k}$. If $n$ is a power of $2$ then the gcd-graph is actually a cubelike graph. The next theorem is thus a special case to that result. Still we include the result so as to have a definite structure of the connection set of the cubelike graph, which will be used to characterize PST on gcd-graphs. We make use of some techniques from \cite{wal} and consider looped graphs.\par
Assume that $X(\C_1)$ and $X(\C_2)$ are two cubelike graphs. It is easy to see that there is a natural isomorphism between $X(\C_1)\times X(\C_2)$ and $X(\C_1\times \C_2)$. 

\begin{theorem}\cite{wal}\label{6a}
A gcd-graph over an abelian group of order $2^n$ is isomorphic to a cubelike graph.
\end{theorem}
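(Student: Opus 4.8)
The plan is to produce one explicit bijection $\phi\colon\Gamma\to\Zl_2^n$, independent of the chosen divisor set, that turns every gcd-set of $\Gamma$ into a connection set in $\Zl_2^n$; then $\phi$ is automatically a graph isomorphism onto a cubelike graph. Two things must be kept in mind throughout. First, $\phi$ cannot be a group homomorphism, since for $a\geq2$ the groups $\Zl_{2^a}$ and $\Zl_2^a$ are not isomorphic, so the adjacency condition $\x-\y\in S\iff\phi(\x)-\phi(\y)\in\C$ has to be verified pair by pair rather than by comparing differences alone. Second, because a gcd-set $S_\Gamma(\D)=\bigcup_{\d\in\D}S_\Gamma(\d)$ is a union of gcd-classes, the same $\phi$ must serve for all $\d\in\D$ simultaneously.

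I would first settle the cyclic case $\Gamma=\Zl_{2^a}$. Let $\phi_a\colon\Zl_{2^a}\to\Zl_2^a$ send the representative $x\in\{0,1,\dots,2^a-1\}$ to the vector of its binary digits, least significant first. The one elementary fact needed is that for $x,y\in\Zl_{2^a}$ and $0\leq j\leq a$ one has $x\equiv y\pmod{2^j}$ if and only if $\phi_a(x)$ and $\phi_a(y)$ agree in their first $j$ coordinates. Since $gcd(x-y,2^a)=2^j$ precisely when $j$ is the largest index with $x\equiv y\pmod{2^j}$ (the value $2^a$ occurring exactly when $x=y$), this shows $gcd(x-y,2^a)=2^p$, where $p$ is the position of the first coordinate in which $\phi_a(x)$ and $\phi_a(y)$ differ, or $p=a$ if they coincide. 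Hence the gcd-class $S_{\Zl_{2^a}}(2^j)$ is exactly $\phi_a^{-1}(\C_j)$, where for $j<a$ the set $\C_j\subseteq\Zl_2^a$ consists of the vectors whose first $j$ coordinates are $0$ and whose $(j{+}1)$-th coordinate is $1$, and $\C_a=\{\o\}$. Therefore $\phi_a$ is a graph isomorphism from $Cay(\Zl_{2^a},S_{\Zl_{2^a}}(\D))$ onto the cubelike graph $X\big(\bigcup_{2^j\in\D}\C_j\big)$.

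For the general case, write $\Gamma=\Zl_{2^{a_1}}\oplus\cdots\oplus\Zl_{2^{a_r}}$ with $a_1+\cdots+a_r=n$ and put $\phi=\phi_{a_1}\times\cdots\times\phi_{a_r}\colon\Gamma\to\Zl_2^{a_1}\times\cdots\times\Zl_2^{a_r}=\Zl_2^n$. Because a gcd-class factors as $S_\Gamma(\d)=S_{\Zl_{2^{a_1}}}(d_1)\times\cdots\times S_{\Zl_{2^{a_r}}}(d_r)$ and each $d_i$ is a power $2^{j_i}$, applying the cyclic case coordinatewise gives $S_\Gamma(\d)=\phi^{-1}(\C_\d)$ with $\C_\d=\C^{(1)}_{j_1}\times\cdots\times\C^{(r)}_{j_r}$, the $\C^{(i)}_{j_i}\subseteq\Zl_2^{a_i}$ being the sets constructed above; taking the union over $\d\in\D$ then yields $S_\Gamma(\D)=\phi^{-1}\big(\bigcup_{\d\in\D}\C_\d\big)$. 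Consequently $\phi$ is a graph isomorphism from $Cay(\Gamma,S_\Gamma(\D))$ onto the cubelike graph $X\big(\bigcup_{\d\in\D}\C_\d\big)$ over $\Zl_2^n$, and loops are preserved because $\phi(\o)=\o$. The only real obstacle is the cyclic base case: recognising that the binary-digit map is the right bijection, that it does not depend on the divisors, and that it intertwines $gcd(x-y,2^a)$ with the first-disagreement pattern of $\phi_a(x),\phi_a(y)$. Everything after that is bookkeeping, and the passage to $\Zl_2^n$ can alternatively be organised through the identification $X(\C_1)\times X(\C_2)\cong X(\C_1\times\C_2)$ noted just before the statement.
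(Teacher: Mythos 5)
Your proposal is correct and follows essentially the same route as the paper: the binary-digit bijection $z\mapsto\tilde{\textbf{z}}$ on each cyclic factor $\Zl_{2^{a_i}}$, the identification of the gcd-class $S_{\Zl_{2^a}}(2^j)$ with the set of vectors vanishing in the first $j$ coordinates and equal to $1$ in the next, the factorization of $Cay(\Gamma,S_\Gamma(\d))$ as a Kronecker product over the cyclic components via $X(\C_1)\times X(\C_2)\cong X(\C_1\times\C_2)$, and the observation that the bijection is independent of $\d$ so the union over $\D$ goes through. Your explicit handling of the degenerate class $\C_a=\{\o\}$ and your remark that $\phi$ is not a group homomorphism (so adjacency must be checked pairwise) are small clarifications of points the paper treats more tersely, but the argument is the same.
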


\begin{proof}
Consider an abelian group $\Gamma=\Zl_{2^{n_1}}\oplus\ldots\oplus\Zl_{2^{n_r}}$. For each set of divisor tuples $\D$, we show that $Cay(\Gamma,S_{\Gamma}(\D))$ is isomorphic to a cubelike graph. Let $\d=\left(d_1,\ldots,d_k,\ldots,d_r\right)\in \D$. For $i=1,\ldots,r$ we have $d_i=2^{k_i}$ for some $k_i\leq n_i$. If $\x=\left(x_1,\ldots,x_r\right),\y=\left(y_1,\ldots,y_r\right)\in\Gamma$ and $\n=\left(2^{n_1},\ldots,2^{n_r}\right)$ then $gcd(\x-\y,\n)=\d$ if and only if $gcd(x_i-y_i,2^{n_i})=d_i$. That is $\x\sim \y$ in $Cay(\Gamma,S_{\Gamma}(\d))$ if and only if $x_i\sim y_i$ in $Cay(\Zl_{2^{n_i}},S_{\Zl_{2^{n_i}}}(d_i))$. Note that, if $d_i=2^{n_i}$ then $Cay(\Zl_{2^{n_i}},S_{\Zl_{2^{n_i}}}(d_i))$ is the graph with loops at each of its vertices and no other edges. We therefore have the following:
$$Cay(\Gamma,S_{\Gamma}(\d))\cong Cay(\Zl_{2^{n_1}},S_{\Zl_{2^{n_1}}}(d_1))\times\ldots\times Cay(\Zl_{2^{n_r}},S_{\Zl_{2^{n_r}}}(d_r)).$$
Now for a fixed $i$, if $z\in\Zl_{2^{n_i}}$ then there exists a unique 2-adic representation
\begin{eqnarray*}
z=\sum\limits_{j=0}^{n_i-1}z_{j}2^{j}, \text{ where } z_{j}\in\left\lbrace 0,1 \right\rbrace \text{ for } j=0,1,\ldots, n_i-1.
\end{eqnarray*}
We write $\tilde{\textbf{z}}=\left( z_0,\ldots,z_{n_i-1}\right)\in \Zl_2^{n_i}$. We show that the map $z\mapsto \tilde{\textbf{z}}$ gives an isomorphism of $Cay(\Zl_{2^{n_i}},S_{\Zl_{2^{n_i}}}(d_i))$ to the cubelike graph $X(\textbf{C}_{d_i})$ over $\Zl_2^{n_i}$ where
$$\textbf{C}_{d_i}=\left\lbrace \left(c_0, c_1,\ldots,c_{n_i-1}\right)\in\Zl_2^{n_i}: c_j=0 \text{ for every } j< k_i \text{ and } c_{k_i}=1\right\rbrace,\;\text{where }d_i=2^{k_i}.$$
Note that for $j>k_i$, the value of $c_j$ can be either $0$ or $1$.
It is enough to show that $u\sim v$ in $Cay(\Zl_{2^{n_i}},S_{\Zl_{2^{n_i}}}(d_i))$ if and only if $\tilde{\ul}-\tilde{\vl}\in \textbf{C}_{d_i}$. Now $u\sim v$ in $Cay(\Zl_{2^{n_i}},S_{\Zl_{2^{n_i}}}(d_i))$ if and only if $gcd(u-v,2^{n_i})=2^{k_i}$. Observe that $gcd(u-v,2^{n_i})=2^{k_i}$ if and only if $u_{j}-v_{j}=0$ for every $j< k_i$ and $u_{k_i}-v_{k_i}=1$. Thus we have
$$Cay(\Zl_{2^{n_i}},S_{\Zl_{2^{n_i}}}(d_i))\cong X(\textbf{C}_{d_i}),\text{ for each }i=1,\ldots,r.$$
Therefore for $\d\in \D$ we find that
$$Cay(\Gamma,S_{\Gamma}(\d))\cong X(\textbf{C}_{d_1})\times\ldots\times X(\textbf{C}_{d_r})\cong X(\textbf{C}_{d_1}\times\ldots\times \textbf{C}_{d_r}).$$
The isomorphism that we exhibited between the vertices of $Cay(\Gamma,S_{\Gamma}(\d))$ and $X(\textbf{C}_{d_1}\times\ldots\times \textbf{C}_{d_r})$ will work for all divisors $\d\in\D$. Hence we have
$$Cay(\Gamma,S_{\Gamma}(\D))\cong X(\textbf{C}),\text{ where }\textbf{C}=\bigcup\limits_{\d\in \D}\textbf{C}_{d_1}\times\ldots\times \textbf{C}_{d_r}.$$ This completes the proof.
\end{proof}

Assume that $\Gamma$ is a finite abelian group. We write $\Gamma=\Gamma_1\oplus \Gamma_2$, where $\Gamma_1$ is an abelian group of order $2^n$ and $\Gamma_2$ is an abelian group of odd order.  Also, consider the cyclic group decomposition of $\Gamma_1$ and $\Gamma_2$ as follows:
$$\displaystyle{\Gamma_1=\Zl_{2^{n_1}}\oplus\ldots\oplus\Zl_{2^{n_r}},\; \Gamma_2=\Zl_{p_{1}^{m_{1}}}\oplus\ldots\oplus\Zl_{p_s^{m_s}}.}$$
We write $\m_{\Gamma}:=\left(2^{n_1},\ldots,2^{n_r},p_1^{m_1},\ldots,p_s^{m_s}\right)$ associated to the group $\Gamma$. In what follows, we will always consider a finite abelian group $\Gamma$ in the form $\Gamma_1\oplus\Gamma_2$ as described above. We have the following lemma on the structure of certain gcd-graphs.

\begin{lem}\label{6b}
Let $\Gamma_1=\Zl_{2^{n_1}}\oplus\ldots\oplus\Zl_{2^{n_r}}$ and $\Gamma_2=\Zl_{p_{1}^{m_{1}}}\oplus\ldots\oplus\Zl_{p_s^{m_s}}$, $p_i>2$, be such that $\Gamma=\Gamma_1\oplus\Gamma_2$. Assume that $d_{r+1},\ldots,d_{r+s}$ are fixed divisors of $p_{1}^{m_{1}},\ldots,p_{s}^{m_{s}}$, respectively. Consider a set of divisor tuples $\D$ such that $\m_{\Gamma}\notin \D$. If the last $s$ components of each $\d\in \D$ are $d_{r+1},\ldots,d_{r+s}$ then there exist a cubelike graph $X(\C)$ such that $$Cay(\Gamma,S_{\Gamma}(\D))\cong X(\C)\times Cay(\Gamma_2,S_{\Gamma_2}\left(\left\lbrace\left(d_{r+1},\ldots,d_{r+s}\right)\right\rbrace\right)).$$
\end{lem}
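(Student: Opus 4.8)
The plan is to mimic the proof of Theorem~\ref{6a}, but now keeping the odd part $\Gamma_2$ intact instead of breaking it into prime cyclic factors. First I would fix $\d=(d_1,\ldots,d_r,d_{r+1},\ldots,d_{r+s})\in\D$. Since the first $r$ coordinates of $\m_\Gamma$ are powers of $2$, each $d_i$ with $i\le r$ is of the form $2^{k_i}$ with $k_i\le n_i$; and by hypothesis the last $s$ coordinates are the fixed divisors $d_{r+1},\ldots,d_{r+s}$. Exactly as in Theorem~\ref{6a}, the congruence condition defining adjacency in $Cay(\Gamma,S_\Gamma(\d))$ factors coordinatewise, so
$$Cay(\Gamma,S_\Gamma(\d))\cong Cay(\Zl_{2^{n_1}},S_{\Zl_{2^{n_1}}}(d_1))\times\cdots\times Cay(\Zl_{2^{n_r}},S_{\Zl_{2^{n_r}}}(d_r))\times Cay(\Gamma_2,S_{\Gamma_2}(\{(d_{r+1},\ldots,d_{r+s})\})).$$
By the $2$-adic bijection argument in Theorem~\ref{6a}, each factor $Cay(\Zl_{2^{n_i}},S_{\Zl_{2^{n_i}}}(d_i))$ is isomorphic to the cubelike graph $X(\mathbf{C}_{d_i})$ over $\Zl_2^{n_i}$, and using the natural isomorphism $X(\C_1)\times X(\C_2)\cong X(\C_1\times\C_2)$ recalled before Theorem~\ref{6a}, the product of the first $r$ factors is $X(\C_{d_1}\times\cdots\times\C_{d_r})$. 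Hence for each $\d\in\D$,
$$Cay(\Gamma,S_\Gamma(\d))\cong X(\C_{d_1}\times\cdots\times\C_{d_r})\times Cay(\Gamma_2,S_{\Gamma_2}(\{(d_{r+1},\ldots,d_{r+s})\})).$$

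Next I would take the disjoint union over $\d\in\D$. The key point is that the vertex-level isomorphism constructed above — namely $(x_1,\ldots,x_r,y)\mapsto(\tilde{\mathbf x}_1,\ldots,\tilde{\mathbf x}_r,y)$, applying the $2$-adic expansion in the first $r$ coordinates and the identity on the $\Gamma_2$-coordinate — does not depend on $\d$ at all, so it simultaneously realizes the isomorphism for every $\d\in\D$. Since the second factor $Cay(\Gamma_2,S_{\Gamma_2}(\{(d_{r+1},\ldots,d_{r+s})\}))$ is the \emph{same} graph for all $\d\in\D$ (because the last $s$ components are fixed), the edge sets of the graphs $X(\C_{d_1}\times\cdots\times\C_{d_r})\times Cay(\Gamma_2,\ldots)$ for varying $\d$ are pairwise disjoint and their union is a Kronecker product whose first factor has edge set $\bigcup_{\d\in\D}(\C_{d_1}\times\cdots\times\C_{d_r})$. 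Setting $\C=\bigcup_{\d\in\D}\C_{d_1}\times\cdots\times\C_{d_r}$, we obtain
$$Cay(\Gamma,S_\Gamma(\D))\cong X(\C)\times Cay(\Gamma_2,S_{\Gamma_2}(\{(d_{r+1},\ldots,d_{r+s})\})),$$
which is the claim.

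The main technical obstacle is justifying that the union over $\d$ genuinely distributes over the Kronecker product — that is, that $\bigl(\bigcup_{\d}X(\C_{d_1}\times\cdots\times\C_{d_r})\bigr)\times Y$ equals $\bigcup_{\d}\bigl(X(\C_{d_1}\times\cdots\times\C_{d_r})\times Y\bigr)$ when $Y$ is the common second factor; this uses that Kronecker product is defined edgewise by a conjunction of adjacency conditions, so fixing $Y$ lets the union pass through the first slot, together with the disjointness of the $S_\Gamma(\d)$ (noted when gcd-sets were defined) to guarantee the union of connection sets in $\Zl_2^{n_1+\cdots+n_r}$ is genuinely a union of a partition. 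One minor point to check is that the hypothesis $\m_\Gamma\notin\D$ is exactly what prevents the degenerate tuple $\d=\m_\Gamma$ (all coordinates maximal) from appearing, so each $\C_{d_i}$ is a nonempty subset of $\Zl_2^{n_i}$ not reducing to a pure loop in \emph{every} $2$-coordinate at once; this keeps $\C$ from being empty and keeps $X(\C)$ a bona fide cubelike graph in the intended sense.
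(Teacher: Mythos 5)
Your proof is correct and follows essentially the same route as the paper's: the paper simply projects $\D$ to $\D^*=\left\lbrace\left(d_1,\ldots,d_r\right):\d\in\D\right\rbrace$, asserts $Cay(\Gamma,S_{\Gamma}(\D))\cong Cay(\Gamma_1,S_{\Gamma_1}(\D^*))\times Cay(\Gamma_2,S_{\Gamma_2}\left(\left\lbrace\left(d_{r+1},\ldots,d_{r+s}\right)\right\rbrace\right))$, and invokes Theorem~\ref{6a} on the first factor, leaving implicit the coordinatewise factorization and the union-versus-product bookkeeping that you spell out. Your closing remark about $\m_{\Gamma}\notin\D$ is slightly off (that tuple would make each $\C_{d_i}=\left\lbrace\o\right\rbrace$, giving loops rather than an empty $\C$), but it is a side comment that does not affect the argument.
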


\begin{proof}
Assume that $\D^*=\left\lbrace\left(d_1,\ldots,d_r\right):\d=\left(d_1,\ldots,d_r,\ldots,d_{r+s}\right)\in \D\right\rbrace$. Therefore
$$Cay(\Gamma,S_{\Gamma}(\D))\cong Cay(\Gamma_1,S_{\Gamma_1}(\D^*))\times Cay(\Gamma_2,S_{\Gamma_2}\left(\left\lbrace\left(d_{r+1},\ldots,d_{r+s}\right)\right\rbrace\right)).$$
By Theoren \ref{6a}, there exist a cubelike graph $X(\C)$ such that $Cay(\Gamma_1,S_{\Gamma_1}(\D^*))\cong X(\C)$. Hence we have the desired result.
\end{proof}
Now we find a sufficient condition for a gcd-graph to exhibit periodicity at $\frac{\pi}{2}$. Let $\mathscr{D}'$  be the collection of all divisors sets $\D$ satisfying the conditions of Lemma \ref{6b} as well as the following two conditions:
\begin{enumerate}
\item the sum of the elements in $\C$ is zero; and
\item $|\C|\equiv 0\;(\text{mod}\; 4)$ whenever $d_{r+i}<p_i^{m_i}$ for some $i=1,\ldots,s$.
\end{enumerate}
Now suppose $\mathscr{D}$ is the collection of all disjoint union of members of $\mathscr{D}'$. The following result determines a class of periodic gcd-graphs.

\begin{theorem}\label{6c}
If $\Gamma$ is a finite abelian group and $\D\in\mathscr{D}$ then $Cay(\Gamma,S_{\Gamma}(\D))$ is periodic at $\frac{\pi}{2}$.
\end{theorem}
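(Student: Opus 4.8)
The plan is to reduce the statement to the two building blocks already established: Proposition~\ref{2d} (which says $X(\C)\times G$ has transition matrix $I$ at $\frac{\pi}{2}$ whenever $\sum_{\x\in\C}\x=\o$ and $|\C|\equiv 0\pmod 4$, for any integral $G$), and Theorem~\ref{2c} together with Lemma~\ref{2b} (which controls the transition matrix of a cubelike graph $X(\C)$ at $\frac{\pi}{2}$ via $H(\tfrac{\pi}{2})=i^{|\C|}P_{\sig}$). The first step is to show that a single member $\D\in\mathscr{D}'$ gives a periodic graph at $\frac{\pi}{2}$, and then combine members of $\mathscr{D}'$ using Proposition~\ref{3a}.

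First I would take $\D\in\mathscr{D}'$ with last $s$ components $d_{r+1},\dots,d_{r+s}$. By Lemma~\ref{6b}, $Cay(\Gamma,S_\Gamma(\D))\cong X(\C)\times Cay(\Gamma_2,S_{\Gamma_2}(\{(d_{r+1},\dots,d_{r+s})\}))$. Since $Cay(\Gamma_2,\cdot)$ is a Cayley graph over an abelian group, it is integral (being a gcd-graph over the odd-order part). Now split into two cases. If some $d_{r+i}<p_i^{m_i}$, then condition~(2) of $\mathscr{D}'$ forces $|\C|\equiv 0\pmod 4$, and condition~(1) gives $\sum_{\x\in\C}\x=\o$; hence Proposition~\ref{2d} applies directly with $G=Cay(\Gamma_2,\cdot)$ and the transition matrix at $\frac{\pi}{2}$ is the identity, so the graph is periodic at $\frac{\pi}{2}$. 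In the remaining case, $d_{r+i}=p_i^{m_i}$ for all $i=1,\dots,s$; then $Cay(\Gamma_2,S_{\Gamma_2}(\{(p_1^{m_1},\dots,p_s^{m_s})\}))$ is the graph on $\Gamma_2$ whose only edges are loops at every vertex (by the reasoning in the proof of Theorem~\ref{6a}), i.e.\ its adjacency matrix is $I_{|\Gamma_2|}$. Its transition matrix is then $e^{it}I$, a scalar multiple of the identity, hence it is periodic. Here condition~(1) alone gives $\sig=\o$, so by Theorem~\ref{2c} the cubelike factor $X(\C)$ is periodic with period $\frac{\pi}{2}$; combining (transition matrix of a Kronecker product via Proposition~\ref{aa}, or simply using that $H_{X(\C)}(\tfrac{\pi}{2})=i^{|\C|}I$ and the $\Gamma_2$-factor contributes $e^{i\pi/2}I$) shows the product is periodic at $\frac{\pi}{2}$. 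Either way, each $\D\in\mathscr{D}'$ yields a gcd-graph periodic at $\frac{\pi}{2}$.

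Next, I would handle a general $\D\in\mathscr{D}$, which by definition is a disjoint union $\D=\D_1\cup\cdots\cup\D_\ell$ with each $\D_j\in\mathscr{D}'$. Since the $S_\Gamma(\d)$ are pairwise disjoint over distinct divisor tuples, the connection sets $S_\Gamma(\D_1),\dots,S_\Gamma(\D_\ell)$ are pairwise disjoint symmetric subsets of $\Gamma$, and $S_\Gamma(\D)$ is their disjoint union. By iterating Proposition~\ref{3a}, the transition matrix of $Cay(\Gamma,S_\Gamma(\D))$ is the product $\prod_{j=1}^\ell H_{S_\Gamma(\D_j)}(t)$. Each factor, evaluated at $t=\frac{\pi}{2}$, is a scalar multiple of the identity by the previous paragraph (identity in the first case, a unimodular scalar times $I$ in the second), so the product at $\frac{\pi}{2}$ is a unimodular scalar multiple of $I$; hence $Cay(\Gamma,S_\Gamma(\D))$ is periodic at $\frac{\pi}{2}$.

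The main obstacle I anticipate is the bookkeeping in the case $d_{r+i}=p_i^{m_i}$ for all $i$: there Proposition~\ref{2d} is not literally applicable (the hypothesis $|\C|\equiv0\pmod 4$ is not assumed), so one must argue separately that the odd-order factor contributes only a global scalar $e^{i\pi/2}$ and that $\sig=\o$ already makes $X(\C)$ periodic at $\frac{\pi}{2}$ via Theorem~\ref{2c}; one must also be careful that "periodic at $\frac{\pi}{2}$'' only requires the transition matrix at that time to be a scalar multiple of the identity, not exactly the identity. The other point requiring a little care is verifying that disjointness of the $\D_j$ as sets of divisor tuples really does translate into disjointness of the connection sets, which is exactly the (disjoint-union) remark recorded right after the definition of $S_\Gamma(\D)$ in Section~2.
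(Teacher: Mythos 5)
Your proof follows essentially the same route as the paper's: reduce to a single $\D\in\mathscr{D}'$ via Lemma~\ref{6b}, split into the two cases according to whether some $d_{r+i}<p_i^{m_i}$ (Proposition~\ref{2d}) or all $d_{r+i}=p_i^{m_i}$ (loops-only factor, $\exp(it(A\otimes I))=\exp(itA)\otimes I$, Theorem~\ref{2c}), and then multiply the factors together with Proposition~\ref{3a}. The only blemish is the parenthetical claim that the $\Gamma_2$-factor ``contributes $e^{i\pi/2}I$'' to the product's transition matrix --- for the Kronecker product the adjacency matrix is $A\otimes I$ and no such extra scalar appears --- but your primary argument via Proposition~\ref{aa} is correct and the conclusion is unaffected.
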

\begin{proof}
Let $\D=\D_1\cup\ldots\cup\D_k\in\mathscr{D}$, where $\D_l\in\mathscr{D}'$ for all $l=1,\ldots,k$. Suppose $d_{r+j}$ is a fixed divisor of $p_{j}^{m_j}$ for $j=1,\ldots,s$. For a fixed $l$ assume that the last $s$ components of each $\d\in \D_l$ are $d_{r+1},\ldots,d_{r+s}$. By using Lemma \ref{6b} we find that $Cay(\Gamma,S_{\Gamma}(\D_l))$ is isomorphic to $X(\C)\times Cay(\Gamma_2,S_{\Gamma_2}\left(\left\lbrace\left(d_{r+1},\ldots,d_{r+s}\right)\right\rbrace\right)).$
Consider the following two cases.\\
\textbf{Case I:} ($d_{r+j}<p_{j}^{m_j}$ for some $j$) By our assumption on $\mathscr{D}$, the connection set $\C$ in $X(\C)$ in this case is such that $|\C|\equiv 0\;(\text{mod}\; 4)$ and the sum of the elements in $\C$ is zero. Therefore by Proposition \ref{2d}, the transition matrix of $Cay(\Gamma,S_{\Gamma}(\D_l))$ is the identity matrix at $\frac{\pi}{2}$.\\
\textbf{Case II:} ($d_{r+j}=p_{j}^{m_j}$ for all $j$) In this case the sum of the elements of $\C$ in $X(\C)$ is zero. Note that $Cay(\Gamma_2,S_{\Gamma_2}\left(\left\lbrace\left(d_{r+1},\ldots,d_{r+s}\right)\right\rbrace\right))$ has loops at each of its vertices and no more edges. Recall that each loop contributes $1$ to the adjacency matrix according to our convention. If $A$ is the adjacency matrix of $X(\C)$ then by using Lemma \ref{6b} we find that the adjacency matrix of $Cay(\Gamma,S_{\Gamma}(\D_l))$ is $A\otimes I$. The transition matrix of $Cay(\Gamma,S_{\Gamma}(\D_l))$ can be calculated as $$\displaystyle{\exp{\left(it(A\otimes I)\right)}=\exp{(itA)}\otimes I}.$$
Observe that $\displaystyle{\exp{(itA)}}$ is the transition matrix of $X(\C)$. By Theorem \ref{2c}, the graph $X(\C)$ is periodic at $\frac{\pi}{2}$ and therefore $Cay(\Gamma,S_{\Gamma}(\D_l))$ is also periodic at $\frac{\pi}{2}$.\\
In both the cases the graph $Cay(\Gamma,S_{\Gamma}(\D_l))$ is periodic at $\frac{\pi}{2}$. Finally, for $\D\in\mathscr{D}$, we apply Proposition \ref{3a} to have the desired result.
\end{proof}
We illustrate this by the following example. Here we find a periodic graph over $\Zl_4\oplus\Zl_2\oplus\Zl_3$.
\begin{ex}\label{eg0}
Consider $\Gamma=\Zl_4\oplus\Zl_2\oplus\Zl_3$ and the set of divisor tuples $\D=\left\lbrace(1,1,1),(1,2,1)\right\rbrace$ of $(4,2,3)$. We show that the graph $Cay(\Gamma,S_{\Gamma}(\D))$ is periodic at $\frac{\pi}{2}$. Suppose $\Gamma'=\Zl_4\oplus\Zl_2$ and consider $\D^*=\left\lbrace(1,1),(1,2)\right\rbrace$. Now we follow Theorem \ref{6a} to find the cubelike graph isomorphic to $Cay(\Gamma',S_{\Gamma'}(\D^*))$. For $\d=(d_1,d_2)=(1,1)$, we set $\C_{d_1}=\left\lbrace(1,0),(1,1)\right\rbrace$ and $\C_{d_2}=\left\lbrace(1)\right\rbrace$. Therefore $\C_{d_1}\times \C_{d_2}=\left\lbrace(1,0,1),(1,1,1)\right\rbrace$. Similarly, for $\d'=(d'_1,d'_2)=(1,2)$, we find that $\C_{d'_1}\times \C_{d'_2}=\left\lbrace(1,0,0),(1,1,0)\right\rbrace$. Hence $Cay(\Gamma',S_{\Gamma'}(\D^*))$ is isomorphic to $X(\C)$ which has the connection set $\C=\left(\C_{d_1}\times \C_{d_2}\right)\cup \left(\C_{d'_1}\times \C_{d'_2}\right)=\left\lbrace(1,0,1),(1,1,1),(1,0,0),(1,1,0)\right\rbrace$. Clearly $|\C|\equiv 0\;(\text{mod}\;4)$ and the sum of the elements in $\C$ is $\o$ in $\Zl_2^3$. Therefore $\D\in\mathscr{D}$ and hence by Theorem \ref{6c}, the graph $Cay(\Gamma,S_{\Gamma}(\D))$ is periodic at $\frac{\pi}{2}$.
\end{ex}
In this way we can construct many gcd-graphs which are periodic. Our next motive is to add extra edges to these graphs so that the graphs exhibit PST. Generally periodicity and PST are considered for connected graphs. Observe that whenever $\D$ generates the whole group $\Gamma$ then the associated gcd-graph is connected. In the following result we find a sufficient condition for a gcd-graph to exhibit PST.\par
 Consider a set of divisor tuples $\D$ of $\m_{\Gamma}$ with $\m_{\Gamma}\notin \D$. Also, assume that for each tuple $d=\left(d_1,\ldots,d_r,\ldots,d_{r+s}\right)$ in $\D$, the divisors $d_{r+j}=p_{j}^{m_j}$ for $j=1,\ldots,s$. Let $X(\C)$ be the cubelike graph as in Lemma \ref{6b} associated to $Cay(\Gamma,S_{\Gamma}(\D))$. We denote the set of all $\D$ such that sum of the elements in $\C$ is non-zero by $\mathscr{\tilde{D}}$.

\begin{theorem}\label{6d}
Let $\Gamma$ be a finite abelian group. Suppose $\D_1\in\mathscr{D}$ and $\D_2\in\tilde{\mathscr{D}}$ and $\D_1\cap\D_2=\emptyset$. If $\D= \D_1\cup \D_2$ so that $\D$ generates $\Gamma$ then $Cay(\Gamma,S_{\Gamma}(\D))$ is connected and admits perfect state transfer at $\frac{\pi}{2}$.
\end{theorem}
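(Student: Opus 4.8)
First I would split the connection set as $S_\Gamma(\D)=S_\Gamma(\D_1)\cup S_\Gamma(\D_2)$ and factor the transition matrix accordingly, so that the $\D_1$-part becomes a harmless scalar while the $\D_2$-part supplies the transfer. Since the sets $S_\Gamma(\d)$ attached to distinct divisor tuples are disjoint and $\D_1\cap\D_2=\emptyset$, the subsets $S_\Gamma(\D_1),S_\Gamma(\D_2)$ are disjoint and symmetric; also $\m_\Gamma\notin\D_1$ (built into membership in $\mathscr D$ through Lemma~\ref{6b}) and $\m_\Gamma\notin\D_2$, so $\o\notin S_\Gamma(\D)$ and the graph is simple, and since $\D$ generates $\Gamma$ it is connected. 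Writing $H_1(t),H_2(t)$ for the transition matrices of $Cay(\Gamma,S_\Gamma(\D_1))$ and $Cay(\Gamma,S_\Gamma(\D_2))$, Proposition~\ref{3a} gives that $Cay(\Gamma,S_\Gamma(\D))$ has transition matrix $H_1(t)H_2(t)$.

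Next I would evaluate each factor at $\frac{\pi}{2}$. Since $\D_1\in\mathscr D$, the proof of Theorem~\ref{6c} actually shows more than periodicity: each $Cay(\Gamma,S_\Gamma(\D_l))$ with $\D_l\in\mathscr D'$ has transition matrix $I$ (Case~I) or $i^{|\C|}I$ for the relevant cubelike factor (Case~II) at $\frac{\pi}{2}$, so $H_1\!\left(\frac{\pi}{2}\right)=\alpha I$ for some $\alpha\in\Cl$ with $|\alpha|=1$. For $\D_2\in\tilde{\mathscr D}$, every tuple of $\D_2$ ends in $\big(p_1^{m_1},\dots,p_s^{m_s}\big)$ and $S_{\Gamma_2}\big(\{(p_1^{m_1},\dots,p_s^{m_s})\}\big)=\{\o\}$, so $Cay\big(\Gamma_2,S_{\Gamma_2}(\{(p_1^{m_1},\dots,p_s^{m_s})\})\big)$ has adjacency matrix $I$; by Lemma~\ref{6b}, $Cay(\Gamma,S_\Gamma(\D_2))\cong X(\C)\times Cay\big(\Gamma_2,S_{\Gamma_2}(\{(p_1^{m_1},\dots,p_s^{m_s})\})\big)$, whence its adjacency matrix is $A\otimes I$ with $A$ that of $X(\C)$ and $H_2(t)=\exp\big(it(A\otimes I)\big)=\exp(itA)\otimes I$. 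Because $\D_2\in\tilde{\mathscr D}$, the sum $\sig$ of the elements of $\C$ is nonzero, so the computation preceding Theorem~\ref{2c} gives $\exp\!\left(i\tfrac{\pi}{2}A\right)=i^{|\C|}P_{\sig}$ and hence $H_2\!\left(\frac{\pi}{2}\right)=i^{|\C|}\big(P_{\sig}\otimes I\big)$; moreover $P_{\sig}$ is an involution with no fixed points (as $P_{\sig}^{2}=P_{\sig+\sig}=P_{\o}=I$ and $\sig\neq\o$), and therefore so is $P_{\sig}\otimes I$.

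Finally I would multiply: $H_1\!\left(\frac{\pi}{2}\right)H_2\!\left(\frac{\pi}{2}\right)=\alpha\, i^{|\C|}\big(P_{\sig}\otimes I\big)$ is a unit-modulus scalar times a fixed-point-free involutory permutation matrix, so for each vertex $u$ there is a $v\neq u$ with the $uv$-entry of the transition matrix at $\frac{\pi}{2}$ of modulus one; that is, $Cay(\Gamma,S_\Gamma(\D))$ has PST at $\frac{\pi}{2}$. The main obstacle is making sure the two parts are compatible, and this is precisely where the abelian hypothesis is used: Proposition~\ref{3a} splits the exponential as a commuting product, and Theorem~\ref{6c} guarantees that the $\D_1$-factor is not merely periodic but equals a scalar multiple of the identity at $\frac{\pi}{2}$, so it cannot destroy the perfect state transfer coming from the $\D_2$-factor; a minor point to check carefully is that $S_{\Gamma_2}\big(\{(p_1^{m_1},\dots,p_s^{m_s})\}\big)$ is exactly $\{\o\}$, and hence that the second Kronecker factor contributes only the identity.
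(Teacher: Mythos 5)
Your proposal is correct and follows essentially the same route as the paper: split off $S_\Gamma(\D_1)$ via Proposition~\ref{3a}, use Theorem~\ref{6c} to neutralize that factor at $\tfrac{\pi}{2}$, and reduce PST to the cubelike factor $X(\C)$ of $Cay(\Gamma,S_\Gamma(\D_2))$ via Lemma~\ref{6b} and Theorem~\ref{2c}. Your only departure is cosmetic --- you unwind Theorem~\ref{2c} into the explicit form $i^{|\C|}(P_{\sig}\otimes I)$ and note that the $\D_1$-factor is in fact a scalar multiple of the identity, which makes the final multiplication step more transparent than the paper's appeal to periodicity alone.
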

\begin{proof}
If $\D$ generates the group $\Gamma$ then the graph $Cay(\Gamma,S_{\Gamma}(\D))$ is clearly connected. By Theorem \ref{6c}, the graph $Cay(\Gamma,S_{\Gamma}(\D_1))$ is periodic at $\frac{\pi}{2}$. Thus by Proposition \ref{3a}, it is enough to show that $Cay(\Gamma,S_{\Gamma}(\D_2))$ exhibits PST at $\frac{\pi}{2}$.\par
Using Lemma \ref{6b} we find that
$$Cay(\Gamma,S_{\Gamma}(\D_2))\cong X(\C)\times Cay(\Gamma_2,S_{\Gamma_2}\left(\left\lbrace\left(p_{1}^{m_1},\ldots,p_{s}^{m_s}\right)\right\rbrace\right)).$$
Since $G=Cay(\Gamma_2,S_{\Gamma_2}\left(\left\lbrace\left(p_{1}^{m_1},\ldots,p_{s}^{m_s}\right)\right\rbrace\right))$ has loops at each of its vertices and no more edges, the adjacency matrix of $G$ is $I$. Suppose $X(\C)$ has the adjacency matrix $A$. The adjacency matrix of $Cay(\Gamma,S_{\Gamma}(\D_2))$ is therefore $A\times I$. Now the transition matrix of $Cay(\Gamma,S_{\Gamma}(\D_2))$ can be calculated as $\displaystyle{\exp{(itA\times I)}=\exp{(itA)}\times I}$. Observe that $\displaystyle{\exp{(itA)}}$ is the transition matrix of $X(\C)$. Since $\D_2\in\tilde{\mathscr{D}}$, the sum of the elements in $\C$ is non-zero. Therefore, by Theorem \ref{2c}, the cubelike graph $X(\C)$ exhibits PST at $\frac{\pi}{2}$. Hence $Cay(\Gamma,S_{\Gamma}(\D_2))$ also exhibits PST at $\frac{\pi}{2}$.
\end{proof}
We illustrate Theorem \ref{6d} by the following example.
\begin{ex}\label{eg1}
Suppose $\Gamma=\Zl_4\oplus\Zl_2\oplus\Zl_3$ and let $\D=\left\lbrace(1,1,1),(1,2,1),(2,2,3),(4,1,3)\right\rbrace$. We show that the graph $Cay(\Gamma,S_{\Gamma}(\D))$ admits PST at $\frac{\pi}{2}$. Let $\D_1=\left\lbrace(1,1,1),(1,2,1)\right\rbrace$. We already have in Example \ref{eg0} that $\D_1\in\mathscr{D}$. Now consider $\D_2=\left\lbrace(2,2,3),(4,1,3)\right\rbrace$. Here the connection set $\C$ in $X(\C)$ associated to $Cay(\Gamma,S_{\Gamma}(\D_2))$ can be evaluated as $\C=\left\lbrace (0,1,0),(0,0,1)\right\rbrace.$ Thus we have $\D_2\in\tilde{\mathscr{D}}$. Note here that $\D$ generates $\Gamma$. Hence, by using Theorem \ref{6d}, we find that $Cay(\Gamma,S_{\Gamma}(\D))$ is connected and exhibits PST at $\frac{\pi}{2}$.
\end{ex}

Now we find a characterization of gcd-graphs having perfect state transfer. We show that if $|\Gamma|\equiv 0\;(\text{mod}\; 4)$ then there is a gcd-graph over $\Gamma$ having perfect state transfer. Consider the following two results.

\begin{lem}\label{n1}
If $\Zl_8$ is a subgroup of $\Gamma$ then there exists a connected gcd-graph over $\Gamma$ exhibiting perfect state transfer at $\frac{\pi}{2}$.
\end{lem}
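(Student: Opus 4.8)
The plan is to reduce Lemma~\ref{n1} to an application of Theorem~\ref{6d} by exhibiting an explicit divisor set $\D=\D_1\cup\D_2$ over $\Gamma$ with $\D_1\in\mathscr{D}$, $\D_2\in\tilde{\mathscr{D}}$, $\D_1\cap\D_2=\emptyset$, and $\D$ generating $\Gamma$. Write $\Gamma=\Gamma_1\oplus\Gamma_2$ as in the standing convention, with $\Gamma_1=\Zl_{2^{n_1}}\oplus\dots\oplus\Zl_{2^{n_r}}$ of order $2^n$ and $\Gamma_2$ of odd order; the hypothesis that $\Zl_8$ is a subgroup of $\Gamma$ forces $2^{n_i}\geq 8$, i.e. $n_i\geq 3$, for at least one index $i$, say $i=1$. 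First I would construct $\D_2$: take the single divisor tuple whose first component is $d_1=4$ (a proper divisor of $2^{n_1}$, since $n_1\geq 3$), whose remaining $2$-part components $d_2,\dots,d_r$ are chosen to be the full moduli $2^{n_j}$, and whose odd components are $d_{r+j}=p_j^{m_j}$ for all $j$. Tracing through the isomorphism of Theorem~\ref{6a} and Lemma~\ref{6b}, the associated cubelike connection set $\C$ consists of the single nonzero vector supported in the first $\Zl_{2^{n_1}}$-block at 2-adic position corresponding to $2^2$ (with the higher bits forced to $0$ because $d_1=4$ is the exact gcd and $d_j=2^{n_j}$ contributes no bits from the other blocks). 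Hence the sum of elements of $\C$ is this nonzero vector, so $\D_2\in\tilde{\mathscr{D}}$, and by Theorem~\ref{2c} the corresponding graph has PST at $\pi/2$.

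Next I would construct $\D_1\in\mathscr{D}$ forcing connectivity. The set $S_\Gamma(\D_2)$ alone generates only the index-$2$ subgroup $2\Zl_{2^{n_1}}\oplus\dots$ inside $\Gamma_1$ (and nothing in $\Gamma_2$), so I need $\D_1$ to supply generators of a unit in $\Zl_{2^{n_1}}$ and generators of each odd cyclic factor. The natural choice is to pick, for each odd prime power $p_j^{m_j}$, a pair of divisor tuples that together form a member of $\mathscr{D}'$: fixing $d_1=1$ in the first block, one gets a cubelike graph whose connection set $\C$ has $|\C|$ a power of $2$; to satisfy condition~(2) of the definition of $\mathscr{D}'$ (namely $|\C|\equiv 0\pmod 4$ when some odd component is a proper divisor) and condition~(1) (sum of $\C$ equal to $\o$), I would take $\D_1$ to be a disjoint union over $j$ of such building blocks, each engineered exactly as in Example~\ref{eg0}: using two divisor tuples differing only in one $2$-part component so that $\C$ is a full affine subspace of size a multiple of $4$, which automatically has zero coordinate sum. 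This makes $\D_1\in\mathscr{D}$, and one checks $\D_1\cap\D_2=\emptyset$ since the first component is $1$ throughout $\D_1$ but $4$ in $\D_2$.

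Finally I would verify that $\D=\D_1\cup\D_2$ generates $\Gamma$: the $\D_2$-part contributes the element $4$ in $\Zl_{2^{n_1}}$ and full generators elsewhere among the $2$-part via the $d_j=2^{n_j}$ positions... actually more carefully, I would ensure $\D_1$ already contains a tuple whose gcd in the first block is $1$, which gives an element with a unit first coordinate, hence generating $\Zl_{2^{n_1}}$; combined with the odd-part generators from $\D_1$ and the remaining $2$-part factors (handled by including appropriate tuples with gcd $1$ in those blocks too), $S_\Gamma(\D)$ generates $\Gamma$. Then Theorem~\ref{6d} applies directly and yields a connected gcd-graph over $\Gamma$ with PST at $\pi/2$. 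The main obstacle I anticipate is the bookkeeping in the second and third steps: one must simultaneously (i) keep the cubelike connection set of the $\mathscr{D}$-part with zero sum and size divisible by $4$, (ii) keep the two parts disjoint, and (iii) guarantee the union generates all of $\Gamma$ including every odd factor — these three constraints interact, and the cleanest route is probably to handle each cyclic factor of $\Gamma$ by its own small building block (as in Examples~\ref{eg0} and~\ref{eg1}) and then take a disjoint union, rather than trying to write down one global $\D$ by hand.
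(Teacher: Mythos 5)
Your overall strategy is the same as the paper's: exhibit $\D_1\in\mathscr{D}$ and $\D_2\in\tilde{\mathscr{D}}$ explicitly and invoke Theorem \ref{6d}. The $\D_1$ part, though described loosely, is workable (in fact, since $n_1>2$, a single tuple with $d_1=1$ already gives $|\C_{d_1}|=2^{n_1-1}\equiv 0\pmod 4$ with zero sum, so your pairing of tuples is unnecessary; the paper just takes all tuples with $d_1=1$ and one other coordinate equal to $1$, which also settles connectivity). The problem is your construction of $\D_2$.

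You claim that for $d_1=4$ the associated cubelike connection set ``consists of the single nonzero vector supported \dots at 2-adic position corresponding to $2^2$, with the higher bits forced to $0$ because $d_1=4$ is the exact gcd.'' This misreads the isomorphism of Theorem \ref{6a}: having exact gcd $2^{k}$ forces the bits \emph{below} position $k$ to vanish and bit $k$ to equal $1$, but leaves the bits \emph{above} position $k$ free. So $\C_{d_1}$ for $d_1=4$ has $2^{n_1-3}$ elements, not one. For $n_1=3$ or $4$ the sum of these elements happens to be nonzero and your argument survives, but for $n_1\geq 5$ every coordinate of the sum is even, so the sum is $\o$; then $\D_2\notin\tilde{\mathscr{D}}$ and Theorem \ref{2c} gives only periodicity, not PST. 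The fix is exactly what the paper does: take $d_1=2^{n_1-1}$, the largest proper power of $2$ dividing $2^{n_1}$, so that $\C'_{d_1}=\left\lbrace(0,\ldots,0,1)\right\rbrace$ genuinely is a singleton and its sum is nonzero for every $n_1$. With that replacement (and your observation that the first components $1$ and $2^{n_1-1}$ keep $\D_1$ and $\D_2$ disjoint), the rest of your argument goes through.
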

\begin{proof}
Suppose $\Gamma_1=\Zl_{2^{n_1}}\oplus\ldots\oplus\Zl_{2^{n_r}}$, $n_1>2$ and $\Gamma_2=\Zl_{p_{1}^{m_{1}}}\oplus\ldots\oplus\Zl_{p_s^{m_s}}$, $p_i>2$ for $1\leq i\leq s$ be such that $\Gamma=\Gamma_1\oplus\Gamma_2$. Consider the set of divisors $$\D=\left\lbrace \left(d_1,\ldots, d_{r+s}\right): d_1=1 \text{ and for } i\geq 2,\; d_i=1 \text{ for atmost one } i, \text{ otherwise } d_i=0\right\rbrace.$$
Here the set $\D$ generates the group $\Gamma$ and hence $Cay(\Gamma,S_{\Gamma}(\D))$ is connected. Now for $\d\in \D$, we show that the cubelike graph $X(\C)$ associated to $Cay(\Gamma,S_{\Gamma}(\left\lbrace \d \right\rbrace))$ (as in Lemma \ref{6b}) has the connection set $\C$ which has the property that $|\C|\equiv 0\;(\text{mod}\;4)$ and the sum of the elements in $\C$ is $\o$.\par
Here notice that $d_1=1=2^0$ and $n_1>2$. Therefore $$\C_{d_1}=\left\lbrace\left(c_0,\ldots,c_{n_1-1}\right)\in\Zl_2^{n_1}: c_0=1 \text{ and for } i\geq 1,\; c_i=0 \text{ or }1 \right\rbrace.$$
Observe that $\C_{d_1}$ has $2^{n_1-1}$ elements and hence $|\C_{d_1}|\equiv 0\;(\text{mod}\;4)$. Also it is clear that the sum of the elements in $\C_{d_1}$ is $\o$. Notice that for any subset $S$ of $\Zl_2^k$, we have $|\C_{d_1}\times S|\equiv 0\;(\text{mod}\;4)$ and the sum of the elements in $\C_{d_1}\times S$ is $\o$. Here the connection set $\C$ in $X(\C)$ is given by $\C=\C_{d_1}\times\C_{d_2}\times\ldots\times\C_{d_r}$. So $|\C|\equiv 0\;(\text{mod}\;4)$ and the sum of the elements in $\C$ is $\o$. Hence we find that $\left\lbrace d\right\rbrace\in\mathscr{D}$ for each $d\in\D$ and therefore $\D\in\mathscr{D}$.\par
Now consider $\D'=\left\lbrace\left(2^{n_1-1},0,\ldots, 0\right)\right\rbrace$. Here we show that $\D'\in\tilde{\mathscr{D}}$. Let $X(\C')$ be the cubelike graph associated to $Cay(\Gamma,S_{\Gamma}(\D'))$ (as in Lemma \ref{6b}). Note that $\C'_{d_1}=\left\lbrace\left(0,\ldots,0,1\right)\right\rbrace$ and for $2\leq i\leq r$ we have $\C'_{d_i}=\left\lbrace\left(0,\ldots,0\right)\right\rbrace$. So the set $\C'$, which is $\C'_{d_1}\times\ldots\times\C'_{d_r}$, contains exactly one element (non zero) and hence $\D'\in\tilde{\mathscr{D}}$. Finally by Theorem \ref{6d} we find that $Cay(\Gamma,S_{\Gamma}(\D\cup\D'))$ is connected and exhibits perfect state transfer at $\frac{\pi}{2}$.
\end{proof}

\begin{lem}\label{n2}
Let $\Zl_4$ be a subgroup of $\Gamma$ and suppose $\Zl_8$ is not a subgroup of $\Gamma$. Then there exists a connected gcd-graph over $\Gamma$ exhibiting perfect state transfer at $\frac{\pi}{2}$.
\end{lem}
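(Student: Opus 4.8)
The plan is to follow the scheme of Lemma \ref{n1} and reduce to Theorem \ref{6d}: I would exhibit a set $\D=\D_1\cup\D_2$ of divisor tuples of $\m_{\Gamma}$ that generates $\Gamma$, with $\D_1\in\mathscr{D}$, $\D_2\in\tilde{\mathscr{D}}$ and $\D_1\cap\D_2=\emptyset$, and then invoke Theorem \ref{6d}. First I would unpack the hypothesis: writing $\Gamma=\Gamma_1\oplus\Gamma_2$ as usual with $\Gamma_1=\Zl_{2^{n_1}}\oplus\cdots\oplus\Zl_{2^{n_r}}$ and $\Gamma_2$ of odd order, the conditions $\Zl_4\leq\Gamma$ and $\Zl_8\not\leq\Gamma$ say exactly that every $n_i$ lies in $\{1,2\}$ and that $n_i=2$ for at least one $i$; relabel so that $n_1=2$. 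The engine behind Lemma \ref{n1} — a two-adic summand $\Zl_{2^{n_1}}$ with $n_1>2$, for which already the divisor $d_1=1$ yields $|\C_{d_1}|=2^{n_1-1}\equiv 0\;(\text{mod}\;4)$ — is thus unavailable, and producing a substitute is the only genuine obstacle.

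The substitute I would use is the elementary fact that, in the cubelike description of Theorem \ref{6a}, the divisors $1$, $2$, $4$ of $4$ give respectively $\C_{1}=\{(1,0),(1,1)\}$, $\C_{2}=\{(0,1)\}$ and (the divisor $2^{n_1}$ being the loop at each vertex) $\C_{4}=\{(0,0)\}$, so that $\C_{1}\cup\C_{2}\cup\C_{4}=\Zl_2^2$. Hence, in any set of divisor tuples whose first coordinate $d_1$ runs over all of $\{1,2,4\}$ while the remaining coordinates are held fixed, the cubelike connection set $\C$ produced by Lemma \ref{6b} is a Cartesian product having $\Zl_2^2$ as a factor; consequently $|\C|\equiv 0\;(\text{mod}\;4)$, and since the elements of $\Zl_2^2$ sum to $\o$ and $|\Zl_2^2|=4$ annihilates every element of a $\Zl_2$-vector space, the sum of the elements of $\C$ equals $\o$ — both automatically, whatever the remaining coordinates may be.

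With this in hand I would take $\D_2=\left\{\left(1,2^{n_2},\ldots,2^{n_r},p_1^{m_1},\ldots,p_s^{m_s}\right)\right\}$: here the cubelike graph attached to it by Lemma \ref{6b} has $\C'=\C_{1}\times\{\o\}\times\cdots\times\{\o\}$, a two-element set of nonzero sum, so $\D_2\in\tilde{\mathscr{D}}$, while $S_{\Gamma}(\D_2)$ — whose elements carry a unit of $\Zl_4$ in the first coordinate and $0$ elsewhere — generates the $\Zl_{2^{n_1}}$-summand. For $\D_1$ I would attach to each cyclic factor of $\Gamma$ other than $\Zl_{2^{n_1}}$ a single block of divisor tuples in which $d_1$ runs over $\{1,2,4\}$, the coordinate belonging to that factor is set to $1$, and all remaining coordinates are ``full''. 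By the observation above each such block meets the defining conditions of $\mathscr{D}'$ (the cubelike sum is always $\o$, and $|\C|\equiv 0\;(\text{mod}\;4)$, which is precisely what the definition requires exactly when the block touches $\Gamma_2$ nontrivially); the blocks are pairwise disjoint since each is singled out by which lone coordinate fails to be ``full'', so their union $\D_1$ lies in $\mathscr{D}$; and the block attached to a factor $\Zl_q$ contributes connection-set elements with a unit of $\Zl_q$ in the relevant coordinate, hence generates $\Zl_q$.

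It then remains only to dispose of the bookkeeping. One checks $\D_1\cap\D_2=\emptyset$: the unique tuple of $\D_2$ has every coordinate after the first equal to the ``full'' value, whereas every tuple of $\D_1$ has some coordinate after the first equal to $1$, which is never the ``full'' value of a nontrivial cyclic factor. And $\D=\D_1\cup\D_2$ generates $\Gamma$, since $\D_2$ generates the $\Zl_{2^{n_1}}$-summand and each block of $\D_1$ generates one of the remaining cyclic summands. Theorem \ref{6d} then yields that $Cay(\Gamma,S_{\Gamma}(\D))$ is connected and exhibits perfect state transfer at $\frac{\pi}{2}$. The step I expect to demand the most care is verifying that every block genuinely satisfies all the conditions defining $\mathscr{D}'$ — in particular keeping straight exactly when the congruence $|\C|\equiv 0\;(\text{mod}\;4)$ must be imposed — but the ``$\{1,2,4\}$ fills $\Zl_2^2$'' fact is tailored so that it holds in each block we use.
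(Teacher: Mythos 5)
Your proposal is correct and follows essentially the same route as the paper: both rest on the observation that the three divisor classes $1,2,4$ of the $\Zl_4$-summand fill out $\Zl_2^2$ in the cubelike picture, forcing $|\C|\equiv 0\;(\text{mod}\;4)$ and zero sum for each block of $\D_1$, and both then feed a suitable singleton $\D_2$ supported on the first coordinate into Theorem \ref{6d}. The only (immaterial) difference is your choice of $\D_2$ with $d_1=1$ (two-element $\C'$ with nonzero sum) where the paper takes $d_1=2$ (one-element $\C'$).
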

\begin{proof}
Suppose $\Gamma_1=\Zl_{2^{n_1}}\oplus\ldots\oplus\Zl_{2^{n_r}}$, $n_1=2$ and $\Gamma_2=\Zl_{p_{1}^{m_{1}}}\oplus\ldots\oplus\Zl_{p_s^{m_s}}$, $p_i>2$ for $1\leq i\leq s$ be such that $\Gamma=\Gamma_1\oplus\Gamma_2$. Consider the set of divisors $$\D=\left\lbrace \left(d_1,\ldots, d_{r+s}\right): d_1\in\left\lbrace 0,1,2\right\rbrace \text{ and for } i\geq 2,\; d_i=1 \text{ for only one } i, \text{ otherwise } d_i=0\right\rbrace.$$
Here the set $\D$ generates the group $\Gamma$ and hence $Cay(\Gamma,S_{\Gamma}(\D))$ is connected. Here notice that if $d_1=0$ then $\C_{d_1}=\left\lbrace\left(0,0\right)\right\rbrace$ and if $d_1=1$ then $\C_{d_1}=\left\lbrace\left(1,0\right),(1,1)\right\rbrace$ and when $d_1=2$ we have $\C_{d_1}=\left\lbrace\left(0,1\right)\right\rbrace.$ Now for a fixed divisor tuple $(d_2,\ldots,d_{r+s})$ consider the set of divisor tuples $\D_1=\left\lbrace 0,1,2\right\rbrace\times\left\lbrace(d_2,\ldots,d_{r+s})\right\rbrace.$ Suppose $X(\C)$ is the cubelike graph associated to $Cay(\Gamma,S_{\Gamma}(\D_1))$ (as in Lemma \ref{6b}). The connection set $\C$ is therefore
$$\C=\left(\bigcup\limits_{d_1\in\left\lbrace 0,1,2\right\rbrace}C_{d_1}\right)\times C_{d_2}\times\ldots\times C_{d_r}.$$ 
Observe that $|\C|\equiv 0\;(\text{mod}\;4)$ and the sum of the elements in $\C$ is $\o$. Hence we find that $\D\in\mathscr{D}$.\par
Now consider $\D'=\left\lbrace\left(2,0,\ldots, 0\right)\right\rbrace$. Here we show that $\D'\in\tilde{\mathscr{D}}$. Let $X(\C')$ be the cubelike graph associated to $Cay(\Gamma,S_{\Gamma}(\D'))$ (as in Lemma \ref{6b}). Note that $\C'_{d_1}=\left\lbrace\left(0,1\right)\right\rbrace$ and for $2\leq i\leq r$ we have $\C'_{d_i}=\left\lbrace\left(0,\ldots,0\right)\right\rbrace$. So the set $\C'$, which is $\C'_{d_1}\times\ldots\times\C'_{d_r}$, contains exactly one element and it is non zero and hence $\D'\in\tilde{\mathscr{D}}$. Finally by Theorem \ref{6d} we find that $Cay(\Gamma,S_{\Gamma}(\D\cup\D'))$ is connected and exhibits perfect state transfer at $\frac{\pi}{2}$.
\end{proof}

\begin{lem}\label{n3}
Let $\Gamma$ be a group such that $|\Gamma|\equiv 0\;(\text{mod}\;4)$. If $\Zl_2$ is a subgroup of $\Gamma$ and $\Zl_4$ is not a subgroup of $\Gamma$ then there exists a connected gcd-graph over $\Gamma$ exhibiting perfect state transfer at $\frac{\pi}{2}$.
\end{lem}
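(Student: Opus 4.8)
The plan is to imitate the proofs of Lemmas \ref{n1} and \ref{n2}: construct divisor-tuple sets $\D_1\in\mathscr{D}$ and $\D_2\in\tilde{\mathscr{D}}$ with $\D_1\cap\D_2=\emptyset$ such that $\D:=\D_1\cup\D_2$ generates $\Gamma$, and then apply Theorem \ref{6d}. First I would extract the structural content of the hypotheses. Since $4\mid|\Gamma|$, the $2$-part $\Gamma_1$ of $\Gamma$ has order $2^{a}$ with $a\ge 2$; and since $\Zl_4$ is not a subgroup of $\Gamma$, no cyclic summand of $\Gamma_1$ can have order larger than $2$. Hence $\Gamma_1\cong\Zl_2^{r}$ with $r\ge 2$, and we may write $\Gamma=\Zl_2^{r}\oplus\Gamma_2$ with $\Gamma_2=\Zl_{p_1^{m_1}}\oplus\cdots\oplus\Zl_{p_s^{m_s}}$ of odd order (possibly $s=0$). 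For $1\le k\le r+s$ let $e_k\in\Gamma$ denote the element equal to $1$ in coordinate $k$ and $0$ in every other coordinate.

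For the perfect-state-transfer ingredient I would take the two divisor tuples that make the first two $\Zl_2$-summands contribute edges while leaving every other coordinate maximal:
\[
\D_2=\bigl\{(1,2,2,\ldots,2,p_1^{m_1},\ldots,p_s^{m_s}),\ (2,1,2,\ldots,2,p_1^{m_1},\ldots,p_s^{m_s})\bigr\}.
\]
Then $\m_\Gamma\notin\D_2$, every odd component of every tuple is maximal, and, reading off the associated cubelike graph via Theorem \ref{6a} and Lemma \ref{6b}, its connection set is $\C=\{e_1,e_2\}$ inside $\Zl_2^{r}$, whose sum $e_1+e_2$ is nonzero; hence $\D_2\in\tilde{\mathscr{D}}$. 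Moreover $S_\Gamma(\D_2)=\{e_1,e_2\}$.

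For the periodicity ingredient I would build one member of $\mathscr{D}'$ for each coordinate of $\Gamma$ not yet handled, in each case letting the divisors of the first two $\Zl_2$-summands run over all of $\{1,2\}$. For $i\in\{3,\ldots,r\}$ let $\D_1^{(i)}$ consist of all tuples whose $i$-th entry is $1$, whose first and second entries together range over $\{1,2\}\times\{1,2\}$, whose remaining $\Zl_2$-entries equal $2$, and whose entries in $\Gamma_2$ are all maximal; for $j\in\{1,\ldots,s\}$ let $\D_1^{(r+j)}$ be defined likewise, except that it is the coordinate $r+j$ that is set to $1$ (all $\Zl_2$-entries from the third onward being $2$, and the other $\Gamma_2$-entries maximal). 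In each such member the associated cubelike connection set $\C$ has exactly four elements, which agree outside the first two coordinates and take every pair of values there; thus $|\C|=4\equiv 0\ (\text{mod}\;4)$, and $\sum_{\x\in\C}\x=\o$ because each of the first two coordinates sums to $0$ over the four elements, while every other coordinate is constant across them and so has fourfold sum $0$ in $\Zl_2$. Within each member the $\Gamma_2$-entries are constant and at least one coordinate is non-maximal, so no member equals $\m_\Gamma$ and the hypotheses of Lemma \ref{6b} hold; and because $|\C|=4$, condition (2) in the definition of $\mathscr{D}'$ is satisfied even for the members $\D_1^{(r+j)}$, which activate an odd coordinate. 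Hence each $\D_1^{(i)}\in\mathscr{D}'$; as these members are pairwise disjoint and disjoint from $\D_2$, the set $\D_1$ given by their union lies in $\mathscr{D}$ (with $\D_1=\emptyset$ when $\Gamma=\Zl_2^{2}$).

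Finally I would check that $\D=\D_1\cup\D_2$ generates $\Gamma$: $S_\Gamma(\D_2)=\{e_1,e_2\}$; for $3\le i\le r$ the set $S_\Gamma(\D_1^{(i)})$ contains $e_i$ (use the tuple whose first two entries are $2$); and for $1\le j\le s$ the set $S_\Gamma(\D_1^{(r+j)})$ contains $e_{r+j}$, a generator of the summand $\Zl_{p_j^{m_j}}$. As $e_1,\ldots,e_{r+s}$ generate $\Gamma$, so does $S_\Gamma(\D)$, and Theorem \ref{6d} gives that $Cay(\Gamma,S_\Gamma(\D))$ is connected and exhibits perfect state transfer at $\frac{\pi}{2}$; in the extreme case $\Gamma=\Zl_2^{2}$ this collapses to $\D=\D_2$ and the graph is $C_4$. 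The one step that is not routine bookkeeping is the requirement $|\C|\equiv 0\ (\text{mod}\;4)$ for each $\mathscr{D}'$-member: a single divisor choice on a $\Zl_2$-summand contributes a cubelike factor of the odd size $1$, so one is forced to let the divisors of \emph{two} distinct $\Zl_2$-coordinates run over the whole set $\{1,2\}$ — exactly the freedom the hypotheses provide, since $\Zl_4\not\le\Gamma$ forces the $2$-part to be elementary abelian and $4\mid|\Gamma|$ forces its rank to be at least $2$ — and then one must retain a further non-maximal coordinate in each such member so that none of them coincides with $\m_\Gamma$, which Lemma \ref{6b} (hence $\mathscr{D}'$) forbids.
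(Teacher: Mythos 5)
Your construction is essentially the paper's own proof: the paper likewise reduces to $\Gamma_1\cong\Zl_2^r$, takes $\D'=\{(1,0,\ldots,0),(0,1,\ldots,0)\}$ as the $\tilde{\mathscr{D}}$-ingredient with cubelike connection set $\{e_1,e_2\}$, and builds the $\mathscr{D}$-ingredient as a union of members in which the first two $\Zl_2$-coordinates range over both divisor values (giving $|\C|=4$ and zero sum) while exactly one further coordinate is activated, before invoking Theorem \ref{6d}. Your write-up only differs in labelling the maximal divisor as $2$ (resp.\ $p_j^{m_j}$) rather than $0$, and in being more explicit about disjointness, the generation check, and the degenerate case $\Gamma=\Zl_2^2$.
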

\begin{proof}
Suppose $\Gamma_1=\Zl_{2^{n_1}}\oplus\ldots\oplus\Zl_{2^{n_r}}$, $n_i=1$ for $1\leq i\leq r$ and $\Gamma_2=\Zl_{p_{1}^{m_{1}}}\oplus\ldots\oplus\Zl_{p_s^{m_s}}$, $p_i>2$ for $1\leq i\leq s$ be such that $\Gamma=\Gamma_1\oplus\Gamma_2$. Consider the set of divisors $$\D=\left\lbrace \left(d_1,\ldots, d_{r+s}\right): d_1,d_2\in\left\lbrace 0,1\right\rbrace \text{ and for } i\geq 3,\; d_i=1 \text{ for only one } i, \text{ otherwise } d_i=0\right\rbrace.$$
For $1\leq i\leq r$, if $d_i=0$ then $\C_{d_i}=\left\lbrace\left(0\right)\right\rbrace$ and when $d_i=1$ we have $\C_{d_i}=\left\lbrace\left(1\right)\right\rbrace.$ Now for a fixed divisor tuple $(d_3,\ldots,d_{r+s})$ consider $\D_1=\left\lbrace 0,1\right\rbrace\times\left\lbrace 0,1\right\rbrace\times\left\lbrace(d_3,\ldots,d_{r+s})\right\rbrace.$ Suppose $X(\C)$ is the cubelike graph associated to $Cay(\Gamma,S_{\Gamma}(\D_1))$ (as in Lemma \ref{6b}). The connection set $\C$ is therefore
$$\C=\left(\bigcup\limits_{d_1\in\left\lbrace 0,1\right\rbrace}C_{d_1}\right)\times\left(\bigcup\limits_{d_2\in\left\lbrace 0,1\right\rbrace}C_{d_2}\right)\times C_{d_3}\times\ldots\times C_{d_r}.$$ 
Observe that $|\C|\equiv 0\;(\text{mod}\;4)$ and the sum of the elements in $\C$ is $\o$. Hence we find that $\D\in\mathscr{D}$.\par
Now consider $\D'=\left\lbrace\left(1,0,0,\ldots, 0\right),\left(0,1,0,\ldots, 0\right)\right\rbrace$. Here it is clear that $\D'\in\tilde{\mathscr{D}}$. Also note that the set $\D\cup\D'$ generates the group $\Gamma$ and hence $Cay(\Gamma,S_{\Gamma}(\D))$ is connected. Finally, by Theorem \ref{6d} we find that $Cay(\Gamma,S_{\Gamma}(\D\cup\D'))$ is connected and exhibits PST at $\frac{\pi}{2}$.
\end{proof}
We now combine Lemma \ref{n1}, Lemma \ref{n2} and Lemma \ref{n3} to state the following theorem.
\begin{theorem}
Let an abelian group $\Gamma$ be such that $|\Gamma|\equiv 0\;(\text{mod}\;4)$. Then there exists a connected gcd-graph over $\Gamma$ exhibiting perfect state transfer at $\frac{\pi}{2}$.
\end{theorem}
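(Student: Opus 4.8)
The plan is to derive the theorem as the disjunction of Lemma~\ref{n1}, Lemma~\ref{n2} and Lemma~\ref{n3}, organized by a trichotomy on the $2$-part of $\Gamma$. First I would write $\Gamma=\Gamma_1\oplus\Gamma_2$ in the standing form, with $\Gamma_1=\Zl_{2^{n_1}}\oplus\cdots\oplus\Zl_{2^{n_r}}$ the Sylow $2$-subgroup and $\Gamma_2$ of odd order. Since $|\Gamma|\equiv 0\;(\text{mod}\;4)$ we have $|\Gamma_1|=2^{n_1+\cdots+n_r}\geq 4$, so $r\geq 1$ and $\Zl_2$ is a subgroup of $\Gamma$.

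Next I would split into three mutually exclusive and exhaustive cases according to $m:=\max_i n_i$. If $m\geq 3$, then $\Zl_8$ is a subgroup of $\Gamma$ and Lemma~\ref{n1} applies directly. If $m=2$, then $\Zl_4\leq\Gamma$ while $\Zl_8\not\leq\Gamma$, so Lemma~\ref{n2} applies. If $m=1$, then $\Gamma_1\cong\Zl_2^{r}$ with $r\geq 2$ (forced by $|\Gamma_1|\geq 4$), hence $\Zl_2\leq\Gamma$ but $\Zl_4\not\leq\Gamma$, and Lemma~\ref{n3} applies, its extra hypothesis $|\Gamma|\equiv 0\;(\text{mod}\;4)$ being precisely what we assumed. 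In each case the cited lemma produces a connected gcd-graph over $\Gamma$ exhibiting perfect state transfer at $\frac{\pi}{2}$, which is exactly the claimed conclusion.

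The only point needing a sentence of justification is that the trichotomy above faithfully matches the lemma hypotheses, i.e. that $\Zl_{2^k}$ embeds in a finite abelian group $\Gamma$ if and only if some primary cyclic factor of its $2$-part has order at least $2^k$; this is immediate from the structure theorem for finite abelian groups. Beyond that I expect no genuine obstacle, since all the substantive content — exhibiting divisor sets $\D\in\mathscr{D}$ and $\D'\in\tilde{\mathscr{D}}$ with $\D\cup\D'$ generating $\Gamma$, and verifying that the associated cubelike connection set $\C$ has cardinality divisible by $4$ with element-sum $\o$ — has already been carried out inside the three lemmas. The theorem is therefore just a short case split invoking them, and the main (modest) care is simply in confirming that the cases cover every $\Gamma$ with $4\mid|\Gamma|$ without overlap.
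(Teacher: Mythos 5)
Your proposal is correct and matches the paper exactly: the paper offers no separate argument beyond the remark that the theorem follows by combining Lemmas~\ref{n1}, \ref{n2} and \ref{n3}, and your trichotomy on the largest cyclic $2$-factor (together with the observation that $4\mid|\Gamma|$ forces $r\geq 2$ in the elementary abelian case) is precisely the intended case analysis. No gaps.
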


We have already seen that gcd-graphs over an abelian groups of odd order do not exhibit PST. In the following theorem we find a necessary and sufficient condition for a class of gcd-graphs to be periodic at $\pi$. Here we add some more edges to a graph $Cay(\Gamma,S_{\Gamma}(\D))$ admitting perfect PST at $\frac{\pi}{2^{k}},k\in\Nl$ and observe the behavior of the transition matrix. Using this, we will find some gcd-graphs not allowing PST at $\frac{\pi}{2^{k}},$ for all $k\in\Nl$. 

\begin{theorem}\label{6e}
Let $\Gamma$ be as defined in Lemma \ref{6b} and assume that $Cay(\Gamma,S_{\Gamma}(\D))$ admits perfect state transfer at $\frac{\pi}{2^{k}},$ for some $k\in\Nl$. Suppose $d_{r+1},\ldots,d_{r+s}$ are fixed divisors of $p_{1}^{m_{1}},\ldots,p_{s}^{m_{s}}$, respectively. Consider $\D'=\left\lbrace \d=\left(d_1,\ldots,d_{r+1},\ldots,d_{r+s}\right)\in\Gamma\;:\;\d \text{ divides } \m_{\Gamma}\right\rbrace,$ so that $\D$ and $\D'$ are disjoint. Also suppose $Cay(\Gamma,S_{\Gamma}(\D'))\cong X(\C')\times G$ with $|\C'|\not\equiv 0\;(\text{mod}\;2)$. The graph $Cay(\Gamma,S_{\Gamma}(\D\cup \D'))$ is periodic at $\pi$ if and only if the eigenvalues of the integral graph $G$ have same parity.
\end{theorem}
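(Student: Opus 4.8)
The plan is to write the transition matrix of $Cay(\Gamma,S_{\Gamma}(\D\cup\D'))$ as a product and evaluate each factor at $t=\pi$. Since $\D$ and $\D'$ are disjoint, the gcd-sets $S_{\Gamma}(\D)$ and $S_{\Gamma}(\D')$ are disjoint symmetric subsets of $\Gamma$, so by Proposition \ref{3a} the transition matrix of $Cay(\Gamma,S_{\Gamma}(\D\cup\D'))$ is $H_{\D}(t)H_{\D'}(t)$, where $H_{\D}$ and $H_{\D'}$ denote the transition matrices of $Cay(\Gamma,S_{\Gamma}(\D))$ and $Cay(\Gamma,S_{\Gamma}(\D'))$. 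First I would determine $H_{\D}(\pi)$. Being a Cayley graph over an abelian group, $Cay(\Gamma,S_{\Gamma}(\D))$ is vertex-transitive, and it admits PST at $\tau=\frac{\pi}{2^{k}}$; hence Theorem \ref{c1} gives $H_{\D}(\tau)=\gamma P$ for a unimodular scalar $\gamma$ and a permutation matrix $P$ with $P^{2}=I$. Using $H_{\D}(a+b)=H_{\D}(a)H_{\D}(b)$ and $k\geq1$, we get $H_{\D}(\pi)=H_{\D}(\tau)^{2^{k}}=\gamma^{2^{k}}P^{2^{k}}=c\,I$, where $c=\gamma^{2^{k}}$ has unit modulus.

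Next I would compute $H_{\D'}(\pi)$ from the decomposition $Cay(\Gamma,S_{\Gamma}(\D'))\cong X(\C')\times G$ supplied by Lemma \ref{6b}. Let $A_{G}=\sum_{s}\mu_{s}F_{s}$ be the spectral decomposition of the (integral) graph $G$, so each $\mu_{s}\in\Zl$, and let $H_{\C'}$ be the transition matrix of the cubelike graph $X(\C')$. By Proposition \ref{aa}, the transition matrix of $X(\C')\times G$ is $\sum_{s}H_{\C'}(\mu_{s}t)\otimes F_{s}$. From the identity $\exp(itP_{\x})=\cos t\,I+i\sin t\,P_{\x}$ and Lemma \ref{2b} we obtain, at $t=\pi$, $H_{\C'}(\pi)=\prod_{\x\in\C'}(-I)=(-1)^{|\C'|}I=-I$ since $|\C'|$ is odd; consequently $H_{\C'}(\mu_{s}\pi)=(-1)^{\mu_{s}}I$ for every $s$. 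Therefore, up to the permutation conjugation realizing the isomorphism, $H_{\D'}(\pi)=I\otimes M$ with $M:=\sum_{s}(-1)^{\mu_{s}}F_{s}$, and hence $H_{\D\cup\D'}(\pi)=c\,(I\otimes M)$ up to the same conjugation.

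Finally I would read off periodicity. Periodicity at $\pi$ means $|H_{\D\cup\D'}(\pi)_{vv}|=1$ for every vertex $v$, which for a unitary matrix is equivalent to $H_{\D\cup\D'}(\pi)$ being diagonal; since conjugation by a permutation matrix and multiplication by the unimodular scalar $c$ preserve this property, the graph is periodic at $\pi$ if and only if $M$ is diagonal. If all $\mu_{s}$ have the same parity then $(-1)^{\mu_{s}}$ equals a fixed $\epsilon\in\{\pm1\}$, so $M=\epsilon\sum_{s}F_{s}=\epsilon I$; then $H_{\D\cup\D'}(\pi)=c\epsilon I$ and the graph is periodic at $\pi$. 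Conversely, if the $\mu_{s}$ do not all share a parity, then $M$ has both $+1$ and $-1$ among its eigenvalues, so $M$ is not a scalar matrix; but $M$ is a polynomial in $A_{G}$, hence commutes with every automorphism of $G$, and $G$ is vertex-transitive (a Cayley graph over $\Gamma_{2}$), so a diagonal $M$ would have all diagonal entries equal and thus be scalar, a contradiction. Hence $M$ is not diagonal and the graph is not periodic at $\pi$.

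The step I expect to be most delicate is the converse implication: passing from ``$M$ is not scalar'' to ``$M$ is not diagonal'', which is precisely where the vertex-transitivity of $G$ is used (equivalently one may argue that $M_{vv}=\sum_{s}(-1)^{\mu_{s}}(F_{s})_{vv}$ is a genuine average with $(F_{s})_{vv}>0$ for all $v$ by vertex-transitivity, so $|M_{vv}|=1$ forces all relevant $\mu_{s}$ to share a parity). A minor point to handle with care is that PST for $Cay(\Gamma,S_{\Gamma}(\D))$ does not by itself force connectedness, so Theorem \ref{c1} should be applied componentwise and the group translations invoked — they are global automorphisms commuting with $H_{\D}(\tau)$ — to see that the scalar $\gamma$, and hence the conclusion $H_{\D}(\pi)=cI$, is uniform across the whole graph.
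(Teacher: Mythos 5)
Your proposal is correct and follows essentially the same route as the paper: factor the transition matrix via Proposition \ref{3a}, reduce the $\D$-part to a scalar multiple of the identity at $\pi$ using Theorem \ref{c1}, and evaluate the $\D'$-part through Lemma \ref{2b} and Proposition \ref{aa} to read the parity condition off the spectral idempotents $F_s$. Your added care in the converse direction (upgrading ``diagonal'' to ``scalar'' via vertex-transitivity of $G$, and applying Theorem \ref{c1} componentwise) only tightens steps the paper passes over implicitly; the argument is otherwise identical.
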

\begin{proof}
By using Theorem \ref{c1} we find that if a vertex transitive graph exhibits PST at $\tau$ then for all $k\in\Nl$, the transition matrix $H\left(2^{k}\tau\right)$ is a scalar multiple of identity. Since $Cay(\Gamma,S_{\Gamma}(\D))$ admits PST at $\frac{\pi}{2^{k}},$ the  associated transition matrix at $\pi$ is a scalar multiple of the identity matrix. By applying Proposition \ref{3a}, the transition matrix of $Cay(\Gamma,S_{\Gamma}(\D\cup \D'))$ can be evaluated as the product of transition matrices of $Cay(\Gamma,S_{\Gamma}(\D))$ and $Cay(\Gamma,S_{\Gamma}(\D'))$. Hence $Cay(\Gamma,S_{\Gamma}(\D\cup \D'))$ is periodic at $\pi$ if and only if $Cay(\Gamma,S_{\Gamma}(\D'))$ is periodic at $\pi$.\par
Also we have $Cay(\Gamma,S_{\Gamma}(\D'))\cong X(\C')\times G$. If the sum of the elements in $\C'$ is $\sigma$ then by Lemma \ref{2b}, the transition matrix of $X(\C')$ at $\frac{\pi}{2}$ can be calculated as $i^{|\C'|}P_{\sigma}$. At $\tau=\pi$, the transition matrix becomes $[i^{|\C'|}P_{\sigma}]^2=i^{2|\C'|}I$ as $P_{\sigma}^2=I$. If $\sum\limits_s\mu_s F_s$ is the spectral decomposition of adjacency matrix of $G$ then by Proposition \ref{aa} the transition matrix of $X(\C')\times G$ becomes $$\sum\limits_s \left(i^{2|\C'|}\right)^{\mu_s}I\otimes F_s=I\otimes\sum\limits_s i^{2|\C'|\mu_s} F_s.$$
If $Cay(\Gamma,S_{\Gamma}(\D'))$ is periodic at $\pi$ then $\exists \gamma\in\Cl$ with $|\gamma|=1$ such that $$I\otimes\sum\limits_s i^{2|\C'|\mu_s} F_s=\gamma I,$$
where the identity matrices have appropriate orders. Note that $F_s^2=F_s$ and $F_r F_s=0$ for $r\neq s$. Multiplying both sides of the above  equation by $I\otimes F_s$ we find that $i^{2|\C'|\mu_s}=\gamma$ for all $s$. If $\mu_s$ and $\mu_{s'}$ are two distinct eigenvalues of $G$ then we have $e^{i|\C'|\left(\mu_s-\mu_{s'}\right)\pi}=1$. By our assumption $|\C'|\not\equiv 0\;(\text{\text{mod}}\;2)$ and therefore the eigenvalues $\mu_s$ and $\mu_{s'}$ must have same parity.\par
Conversely, suppose the eigenvalues of $G$ have same parity and let $i^{2|\C'|\mu_s}=\gamma$. Then for any other eigenvalue $\mu_{s'}$, we see that $i^{2|\C'|\mu_{s'}}=i^{2|\C'|\mu_{s}}\cdot i^{2|\C'|\left(\mu_{s'}-\mu_s\right)}=\gamma$. Thus $i^{2|\C'|\mu_s}=\gamma$ for all $\mu_s$. Therefore $I\otimes\sum\limits_s i^{2|\C'|\mu_s} F_s=\gamma I$ and hence $Cay(\Gamma,S_{\Gamma}(\D'))$ is periodic at $\pi$. This in turn implies that $Cay(\Gamma,S_{\Gamma}(\D\cup \D'))$ is periodic at $\pi$. 
\end{proof}
We thus have a necessary condition for PST at $\frac{\pi}{2^{k}},\;k\in\Nl$ in a class of gcd-graphs given in Theorem \ref{6e}. We include this as a corollary.
\begin{cor}\label{6f}
Suppose the condition of Theorem \ref{6e} holds. If $Cay(\Gamma,S_{\Gamma}(\D\cup \D'))$ admits perfect state transfer at $\frac{\pi}{2^{k}},\;k\in\Nl$ then the eigenvalues of $G$ have same parity.
\end{cor}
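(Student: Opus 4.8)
The plan is to read the corollary straight off Theorem \ref{6e}, once we observe that perfect state transfer at $\frac{\pi}{2^{k}}$ forces periodicity at $\pi$.

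First I would recall the observation made at the very beginning of the proof of Theorem \ref{6e}: a vertex-transitive graph that exhibits PST at a time $\tau$ has, for every $j\in\Nl$, the transition matrix $H(2^{j}\tau)$ equal to a scalar multiple of the identity. Indeed, by Theorem \ref{c1} the matrix $H(\tau)$ is a unit scalar times a permutation matrix $P$ with $P^{2}=I$, so $H(2^{j}\tau)=H(\tau)^{2^{j}}$ collapses to a scalar matrix. Applying this with the graph $Cay(\Gamma,S_{\Gamma}(\D\cup\D'))$ — which is a Cayley graph, hence vertex-transitive — and with $\tau=\frac{\pi}{2^{k}}$, $j=k$, we conclude that the transition matrix of $Cay(\Gamma,S_{\Gamma}(\D\cup\D'))$ at $\pi$ is a scalar multiple of the identity; that is, this graph is periodic at $\pi$.

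Next I would invoke Theorem \ref{6e} itself: under the hypotheses in force, $Cay(\Gamma,S_{\Gamma}(\D\cup\D'))$ is periodic at $\pi$ if and only if the eigenvalues of the integral graph $G$ have the same parity. Combining this equivalence with the periodicity at $\pi$ just established, the eigenvalues of $G$ must all have the same parity, which is exactly the assertion.

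There is essentially nothing hard here; the corollary is a direct repackaging of Theorem \ref{6e}. The only place deserving a line of care is the implication ``PST at $\frac{\pi}{2^{k}}$ $\Rightarrow$ periodic at $\pi$'', specifically the appeal to Theorem \ref{c1} to pass from a single unit-modulus entry of $H(\tau)$ to $H(\tau)$ being a scalar times an involutory permutation matrix — but this is precisely the step already carried out (and cited) in the proof of Theorem \ref{6e}, so it can simply be reused.
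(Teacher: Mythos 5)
Your argument is correct and is exactly the derivation the paper intends: the paper states Corollary \ref{6f} without a separate proof, treating it as an immediate consequence of Theorem \ref{6e}, and the one nontrivial link you supply (PST at $\frac{\pi}{2^{k}}$ forces $H(\pi)$ to be a scalar multiple of the identity, via Theorem \ref{c1}) is the same step already carried out verbatim in the first paragraph of the proof of Theorem \ref{6e}. Nothing further is needed.
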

In the following example we use Corollary \ref{6f} to find a gcd-graph not having PST at $\frac{\pi}{2^{k}},$ for all $k\in\Nl$.
\begin{ex}
Suppose $\Gamma=\Zl_4\oplus\Zl_2\oplus\Zl_3$ and consider $\D=\left\lbrace(1,1,1),(1,2,1),(2,2,3),(4,1,3)\right\rbrace$ and $\D'=\left\lbrace(2,2,1)\right\rbrace$. In Example \ref{eg1} we already found  that $Cay(\Gamma,S_{\Gamma}(\D))$ admits PST at $\frac{\pi}{2}$. By using Lemma \ref{6b}, we find that $Cay(\Gamma,S_{\Gamma}(\D'))\cong X(\C)\times G$, where $G=Cay(\Zl_3,S_{\Zl_3}(1))$. Note that $Cay(\Zl_3,S_{\Zl_3}(1))$ is actually the complete graph $K_3$ whose eigenvalues are $-1,-1$ and $2$. Clearly the eigenvalues of $G$ does not have same parity. Hence, by Corollary \ref{6f}, we conclude that $Cay(\Gamma,S_{\Gamma}(\D\cup \D'))$ does not exhibit PST at $\frac{\pi}{2^{k}},\;k\in\Nl$.
\end{ex}
Thus we can strike out many gcd-graphs that do not have PST at $\frac{\pi}{2^{k}},$ for all $k\in\Nl$.

\section*{Conclusions}
Quantum networks are based on finite graphs. We consider PST with respect to the adjacency matrix of a graph as the Hamiltonian of a quantum system. We have studied gcd-graphs for PST. Here we find a method to construct gcd-graphs having PST. We show that if an abelian group has an order divisible by $4$ then there exists a connected gcd-graph over that group exhibiting PST. In fact there are many such graphs. In Lemma \ref{n1}, Lemma \ref{n2} and Lemma \ref{n3}, we can observe that there are many other choices for the set $\D\in\mathscr{D}$. In particular, in Lemma \ref{n1}, if we consider a set of divisor tuples $\D$ where each tuples in $\D$ has first component $1$ and the remaining components are any divisors, then also the set $\D\in\mathscr{D}$. Finally, in Theorem \ref{6e}, a necessary and sufficient condition is given for a certain class of gcd-graphs to exhibit periodicity at time $\pi$. From this we find a necessary condition to have PST in a certain class of gcd-graphs at some specific times. This gives a partial characterization of gcd-graphs having perfect state transfer.
\par There are few scopes for further research in this direction.
\begin{itemize}
\item Here we have results which find PST in gcd-graphs at time $\frac{\pi}{2}$. Also we find some gcd-graphs not having PST at $\frac{\pi}{2^{k}}$ for all $k\in\Nl$. One can try to find PST in these gcd-graphs at other possible times.
\item One can also try to find PST on other gcd-graphs that are not covered in this article.
\item Finally, one can try to find PST in integral Cayley graphs which are in fact not gcd-graphs and if possible, characterize all such graphs having perfect state transfer.
\end{itemize}

\section*{Acknowledgment}
We thank the anonymous reviewer(s) for the useful comments in an earlier manuscript.

\end{document}